\newcommand{\field}[1]{\mathbf #1}
\newcommand{\mf}[1]{\mathfrak #1}
\newcommand{\mc}[1]{\mathcal #1}
\newcommand{\ms}[1]{\mathscr #1}
\newcommand{\widebar}[1]{\overline{#1}}
\newcommand{\R}{\field R}
\newcommand{\Z}{\field Z}
\newcommand{\simto}{\stackrel{\sim}{\to}}
\newcommand{\shom}{\ms H\!om}
\DeclareMathOperator{\Sh}{\bf Sh}
\DeclareMathOperator{\mSh}{Sh}
\newcommand{\send}{\ms E\!nd}
\DeclareMathOperator{\End}{End}
\newcommand{\Spec}{\operatorname{Spec}}
\newcommand{\spec}{\operatorname{Spec}}
\renewcommand{\P}{\field P}
\DeclareMathOperator{\Pic}{Pic}
\DeclareMathOperator{\pr}{pr}
\newcommand{\m}{\boldsymbol{\mu}}
\newcommand{\G}{\field G} 
\renewcommand{\H}{\operatorname{H}}
\DeclareMathOperator*{\tensor}{\otimes}
\DeclareMathOperator*{\ltensor}{\stackrel{\field L}{\otimes}}
\newcommand{\surj}{\twoheadrightarrow}
\newcommand{\inj}{\hookrightarrow}
\DeclareMathOperator{\Aut}{\operatorname{Aut}}
\DeclareMathOperator{\Isom}{\operatorname{Isom}}
\DeclareMathOperator{\Br}{\operatorname{Br}}
\DeclareMathOperator{\B}{\operatorname{B\!}}
\newcommand{\eps}{\varepsilon}
\DeclareMathOperator{\naive}{\bf NMO}
\DeclareMathOperator{\azumaya}{\bf BLT}
\newtheorem{lem}{Lemma}
\numberwithin{lem}{subsection}
\newtheorem{prop}[lem]{Proposition}
\newtheorem{cor}[lem]{Corollary}
\theoremstyle{definition}
\newtheorem{defn}[lem]{Definition}
\newtheorem{assumption}[lem]{Assumption}
\theoremstyle{remark}
\newtheorem{remark}[lem]{Remark}
\author{Rajesh S.~ Kulkarni}
\author{Max Lieblich}
\title{Blt Azumaya algebras and moduli of maximal orders}
\begin{document}
\begin{abstract}
  {We study moduli spaces of maximal orders in a ramified division algebra over the function field of a smooth projective surface.  As in the case of moduli of stable commutative surfaces, we show that there is a Koll\'ar-type condition giving a better moduli problem with the same geometric points: the stack of blt Azumaya algebras.  One virtue of this refined moduli problem is that it admits a compactification with a virtual fundamental class.}
\end{abstract}

\maketitle
\tableofcontents
\section{Introduction}
\label{sec:intro}

Much recent progress has been made on the structure theory of maximal orders
over algebraic surfaces.  Several authors have produced a satisfying minimal
model program for such orders (a sampling of which is represented by
\cite{MR2472162,MR2180454,thong} and their references).  Others have studied
the moduli of Azumaya orders in a fixed unramified division algebra and related
moduli problems (e.g.\ \cite{MR2309155,Lieblich20114145,MR2060023, MR2579390}).  

In this paper we extend the moduli theory to orders in a ramified Brauer class.
In so doing we encounter a phenomenon similar to that which occurs in the
moduli theory of stable projective surfaces, arising from an analogue of
Koll\'ar's condition on the compatibility of the reflexive powers of the
dualizing sheaf with base change.  Because the global dimension of our orders
is $2$, things are technically rather simpler than in Koll\'ar's theory, and we
arrive at a satisfying moduli space with a natural compactification carrying a
virtual fundamental class.  

As in the commutative theory, the na\"\i ve moduli problem (given by fixing the
properties of the fibers of a family) contains a refined version as a bijective
closed substack.  This refined moduli problem can be described as a moduli
problem of Azumaya algebras on stacks rather than orders on varieties.  (One
can also interpret this refined problem as a moduli theory of parabolic Azumaya
algebras.)  These Azumaya algebras have a precise interaction with the
ramification divisor arising from the structure of hereditary orders in matrix
algebras over discrete valuation rings, first described by Brumer, giving them
a structure we call \emph{Brumer log terminal\/}, or {\it blt}.

We begin in Section \ref{sec:normal-ord} by studying the local problem,
relating hereditary algebras over complete dvrs to Azumaya algebras over root
construction stacks.  This is globalized in Section \ref{sec:globalization}.  A
simple approach to families of maximal orders is described in Section
\ref{sec:naive-relat-maxim}.  The two resulting moduli problems are described
in Section \ref{sec:moduli-1} and compared in Section
\ref{sec:comparison-result} (with a proof that they can differ included in
Section \ref{sec:moduli-problems-are}).  The comparison relies crucially on
ideas similar to those introduced by Koll\'ar in his theory of hulls and husks
and a local analysis of reflexive Azumaya algebras on families of rational
double points.  Finally, in Section \ref{sec:gener-azum-algebr} we describe how
to compactify the Azumaya problem using algebra-objects of the derived category
of a stack (that one might think of as ``parabolic generalized Azumaya
algebras'') along lines familiar from \cite{MR2579390}, yielding a virtual
fundamental class.  We na\"\i vely hope that perhaps these classes will be
useful for defining new numerical invariants of terminal orders.

\section*{Acknowledgments}

The authors had helpful conversations with Daniel Chan, Paul Hacking, and Colin
Ingalls while working on this paper.  During the course of this work, the first
author was partially supported by NSF grants DMS-0603684, DMS-1004306 and DMS-1305377.  The
second author was partially supported by an NSF Postdoctoral Fellowship, NSF
grant DMS-0758391, NSF CAREER grant DMS-1056129, a Sloan Research Fellowship, and a University of Washington
Faculty Fellowship.

\section{Normal orders and parabolic Azumaya algebras} \label{sec:normal-ord}

\subsection{Hereditary orders over dvrs} \label{sec:hered-orders-over} Fix a
discrete valuation ring $R$ with uniformizer $t$ and residue field $\kappa$.
Fix a separable closure $\kappa\subset\widebar\kappa$.  Fix a positive integer
$n$ invertible in $R$.  Given a positive integer $r$, let $\pi:\ms X_r\to\spec
R$ be the stack-theoretic quotient of the natural action of $\m_r$ on
$R[s]/(s^r-t)$.  The root construction provides an isomorphism
$\B\m_{r,\widebar\kappa}\simto(\ms X_r\tensor_R\widebar\kappa)_{\text{\rm
red}}$.  An Azumaya algebra $\ms A$ on $\ms X_r$ thus gives rise to an Azumaya
algebra on $\B\m_{r,\widebar\kappa}$ by restriction.  By \S 4.1 of
\cite{paiitbgoaas}, any such algebra is isomorphic to the sheaf of
endomorphisms of the vector bundle on $\B\m_{r,\widebar\kappa}$ associated to a
representation of $\m_r$.  Call this the \emph{representation associated to
$\ms A$\/}; this representation is defined up to tensoring with a character.

\begin{defn}\label{D:type} Say that a hereditary order $A$ over $R$ is \emph{of
  type $m$\/} if $A\tensor R^{hs}$ has exactly $m$ distinct indecomposable
  projective modules, where $R^{hs}$ is the strict Henselization of $R$.  Given
  a positive divisor $m$ of $n$, call an Azumaya algebra $\ms A$ over $\ms X$
  \emph{of type $m$\/} if representation associated to $\ms A$ is the
  restriction of scalars of the regular representation of $\m_m$ via the
  natural quotient map $\m_n\to\m_m$.  \end{defn}

\begin{defn}
  The \emph{hereditary site} $\mc F$ of $\Spec R$ is the site whose underlying category
  consists of faithfully flat quasi-finite \'etale $R$-schemes $U\to \Spec R$
  with $U$ of pure dimension $1$, with coverings given by collections of $R$-maps
  $U_i\to U$ that are jointly surjective. 
\end{defn}
Define two stacks on the hereditary site of $\spec R$ as follows.

\begin{defn} Given an object $U\to\spec R$, an Azumaya algebra $\ms A$ on $\ms
  X_U$ is \emph{$n$-typed\/} if for each closed point $u\in U$ the restriction
  of $\ms A$ to $\ms X\tensor_R\ms O_{U,u}$ has type $m$ for some positive
  integer $m$ dividing $n$.  A hereditary order $A$ on $U$ is
  \emph{$n$-typed\/} if for every closed point $u\in U$, the restriction of $A$
  to $\ms O_{U,u}$ has type $m$ for some positive integer $m$ dividing $n$.
\end{defn}

\begin{defn} Given an object $U\to\spec R$ of $\mc F$, the stack $\mc A_n$ has
  as objects over $U$ the groupoid of $n$-typed Azumaya algebras $\ms A$ of
  degree $n$ on $\ms X\times_{\spec R}U$.  The stack $\mc H_n$ has as objects
  the groupoid of $n$-typed hereditary orders on $U$.  \end{defn}

Since the $n$-typed Azumaya and hereditary properties are \'etale-local, it is
clear that both $\mc A_n$ and $\mc H_n$ are stacks.

\begin{prop}\label{P:main-hered-az} For any object $\ms A\in\mc A_n(U)$, the
  finite $\ms O_U$-algebra $\pi_\ast\ms A$ lies in $\mc H_n$.  The resulting
  map of stacks $\mc A_n\to\mc H_n$ is a $1$-isomorphism.  \end{prop}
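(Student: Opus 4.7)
The plan is to reduce to a strictly henselian local base and then produce an explicit dictionary between representations of $\m_n$ and Brumer's normal form for hereditary orders.

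First I would reduce to the case $U = \Spec R$ with $R$ strictly henselian.  Both $\mc A_n$ and $\mc H_n$ are stacks on the hereditary site, and their $n$-typed conditions are defined by passing to strict henselizations at closed points; since $\pi$ is flat and proper, $\pi_\ast$ commutes with \'etale base change on $U$.  Thus the whole proposition follows once I show that, for strictly henselian $R$, the functor $\pi_\ast$ induces an equivalence between the groupoid of degree-$n$, $n$-typed Azumaya algebras on $\ms X_n$ and the groupoid of $n$-typed hereditary orders on $R$.

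Second I would explicitly classify both sides.  For Azumaya algebras: since $Y_n = \Spec R[s]/(s^n-t)$ is a strictly henselian local cover of $\ms X_n$ carrying a $\m_n$-action, vector bundles on $\ms X_n$ are the same as $\Z/n\Z$-graded free modules over $R[s]/(s^n-t)$, equivalently finite-dimensional $\m_n$-representations.  A degree-$n$ Azumaya algebra of type $m$ then has the form $\End_{\ms X_n}(\mc V)$ for $\mc V$ the bundle attached to the representation dictated by Definition \ref{D:type}, with $\mc V$ determined up to tensoring by a character (which does not alter the endomorphism algebra).  For hereditary orders: over a strict henselization, every hereditary order in $M_n(K)$ of type $m$ is, up to Morita equivalence and conjugation, the standard block-upper-triangular order $\Lambda_m \subset M_n(R)$ with blocks of size $n/m$ and the usual pattern of $R$ and $tR$ entries, by the classical Brumer description.

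Third --- the technical heart --- I would compute $\pi_\ast\End_{\ms X_n}(\mc V) = \End_R(\pi_\ast\mc V)$ and match it with $\Lambda_m$.  Here $\pi_\ast\mc V$ is the $R$-module of $\m_n$-invariants in $\mc V \otimes_{\mc O_{\ms X_n}} \mc O_{Y_n}$, and the character decomposition of $V$ produces a specific chain-of-lattices description of $\pi_\ast\mc V \subset V \otimes_R K$; direct inspection shows that for the representation attached to type $m$ this chain is exactly the one whose endomorphism ring is $\Lambda_m$.  Conversely, given a hereditary order $A$, the Jacobson-radical filtration $A \supset J \supset J^2 \supset \cdots \supset J^m = tA$ assembles into an $n$-graded module over $R[s]/(s^n-t)$, i.e.\ a bundle $\mc V_A$ on $\ms X_n$ whose endomorphism algebra pushes forward to $A$; this produces a candidate inverse to $\pi_\ast$.

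Finally, full faithfulness follows from $\pi_\ast\mc O_{\ms X_n} = R$ via the projection formula together with the fact that automorphisms of either algebra are Zariski-locally inner with matching centers; descent over the hereditary site then upgrades the local equivalence to a $1$-isomorphism of stacks.  The main obstacle is the computation of step three: checking that the character data of the $\m_n$-representation attached to type $m$ produces exactly the Brumer lattice chain whose endomorphism ring is $\Lambda_m$, and verifying that this correspondence is functorial in isomorphisms on both sides.
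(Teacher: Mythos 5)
Your overall route is essentially the paper's: reduce to a strictly henselian dvr, invoke Brumer's lattice-chain normal form, and match chains with $\m_n$-equivariant graded data on the root stack; the paper packages your step three by citing the equivalence between parabolic bundles with denominator $n$ and locally free sheaves on $\ms X_n$, after first reducing type $m$ on $\ms X_n$ to the case $n=m$ by pulling back from $\ms X_m$. But two of your steps fail as written. First, the inverse construction in step three has a rank mismatch: $\bigoplus_i J^i$ is a graded module of rank $n^2$ over $R[s]/(s^m-t)$ (note also that the natural period of the radical filtration is $m$, not $n$), so its endomorphism algebra has rank $n^4$ and its pushforward cannot be $A$. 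What works is either to apply the radical filtration to a lattice rather than to $A$ itself --- setting $M_i=J^iM_0$ for a suitable lattice $M_0\subset V$ recovers the Brumer chain --- or to note that essential surjectivity already follows from your forward computation, since Brumer's theorem makes every type-$m$ hereditary order conjugate, hence isomorphic, to the standard order you exhibit as a pushforward. You also silently assume that a degree-$n$ Azumaya algebra on $\ms X_n$ splits as $\send(\mc V)$; this needs $\Br(K(R))[n]=0$ together with injectivity of $\Br(\ms X_n)\to\Br(K(R))$, which is exactly where the paper's proof starts.

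The more serious gap is full faithfulness. The claim that ``automorphisms of either algebra are Zariski-locally inner'' is false on the hereditary side: over strictly henselian $R$ (where Zariski-local means global), conjugation by a generator of the normalizer of the standard order cyclically permutes its $m$ indecomposable projectives, so for $m\geq 2$ it is genuinely outer; its counterpart on the stack side is the character twist $\mc V\mapsto\mc V\tensor\chi$, likewise not inner. Both automorphism groups therefore strictly contain the inner automorphisms, and no ``locally inner plus matching centers'' argument can identify them. Moreover, $\pi_\ast\ms O_{\ms X_n}=R$ does not make $\pi_\ast$ faithful on coherent sheaves --- pushforward forgets all nonzero graded pieces --- so it cannot carry the weight you assign it. The argument you need is the paper's: both $\Aut(\send(\mc V))$ and $\Aut(A)$ inject into $\Aut(\End_K(V))$ by restriction to the generic point; Skolem--Noether writes any generic automorphism as conjugation by some $\phi\in\Aut(V)$; and preserving $A$ is equivalent to $\phi$ preserving the chain $\{M_i\}$, which is in turn equivalent to $\phi$ being induced by an automorphism of the parabolic sheaf, hence of $\mc V$. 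This identifies the two automorphism groups as the same subgroup of the generic automorphisms, which is what full faithfulness requires.
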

\begin{proof} Since both stacks are limit-preserving and the statements are
  \'etale-local on $U$, it suffices to prove the following: if $R$ above is a
  strictly Henselian discrete valuation ring then for any locally free sheaf
  $\ms V$ of rank $n$ and type $m$ on $\ms X$, the $R$-algebra
  $\pi_\ast\send(\ms V)$ is hereditary of type $m$, and in fact this gives an
  equivalence of groupoids between Azumaya algebras of degree $n$ and type $m$
  on $\ms X$ and hereditary $R$-algebras of degree $n$ and type $m$.  Indeed,
  since $\Br(K(R))[n]=0$, the generic fiber of any hereditary $R$-order and the
  Brauer class of any Azumaya algebra of degree $n$ over $\ms X$ are trivial,
  which reduces us to the case of matrix algebras and orders therein.

  We recall Brumer's fundamental description of hereditary orders
  \cite{MR0152565, MR0156872} (combined with Artin-de Jong characterization of
  the number of indecomposable projectives = number of embeddings in maximal
  orders): given a $K$-vector space $V$ of dimension $n$, the hereditary orders
  in $\End(V)$ of type $m$ are equivalent to collections of $R$-submodules
  $\{M_i\subset V\}_{i\in\Z}$ such that for all $i$ we have $M_{i+1}\subsetneq
  M_i$ and $M_{i+m}=tM_i$, up to a shift of indices.  The equivalence is given
  by sending $\{M_i\}$ to the ring of endomorphisms $f$ of $V$ such that for
  all $i$ we have $f(M_i)\subset M_i$; this filtered endomorphism ring is then
  the hereditary order corresponding to the filtered module $\{M_i\}$.

  On the other hand, Azumaya algebras of type $m$ on $\ms X_n$ are the
  pullbacks of Azumaya algebras $\ms A'$ of type $m$ on $\ms X_m$, and any such
  algebra $\ms A'$ is isomorphic to the pushforward of its pullback to $\ms
  X_n$ via the natural map $\ms X_n\to\ms X_m$.

  Thus, it suffices to prove the proposition in case $n=m$.  The filtered
  module $\{M_i\}$ is precisely an object of the category of parabolic vector
  bundles with denominator $n$, called $\operatorname{Par}_{\frac{1}{n}}(\spec
  R,(t))$ in \cite{MR2353089},  and the corresponding order is nothing other
  than the endomorphisms of the parabolic sheaf.  Just as in \cite{MR2353089},
  we know that there is a locally free sheaf $\ms V$ on $\ms X_n$ giving rise
  to $\{M_i\}$ in such a way that $\End(\ms V)$ equals the endomorphisms of the
  parabolic sheaf.  But the $R$-module $\End(\ms V)$ is precisely
  $\pi_\ast\send(\ms V)$.  

  What is $\ms V$?  Since each inclusion $M_{i+1}\subset M_i$ is proper, the
  eigendecomposition of $\ms V$ must have $n$ distinct summands, which implies
  that the representation associated to $\ms V$ is the regular representation.

  What are the automorphisms of $A:=\pi_\ast\send(\ms V)$?  Any
  $R$-automorphism of $A$ localizes to a $K$-automorphism of $\End(V)$, which
  by the Skolem-Noether theorem is given by conjugation by an automorphism
  $\phi$ of $V$.  If this conjugation is to preserve the set of morphism
  stabilizing the filtered module $\{M_i\}$ then $\phi$ itself must preserve
  the filtration, which means precisely that $\phi$ is induced by an
  automorphism of the parabolic sheaf corresponding to $\{M_i\}$, which in turn
  is equivalent to $\phi$ being induced by an automorphism of $\ms V$.  Thus,
  the induced map $\Aut(\send(\ms V))\to\Aut(A)$ is a bijection, as desired.
\end{proof}

The reader wishing to avoid stacks can also interpret the equivalence purely in
terms of parabolic sheaves: the hereditary orders on $R$ are equivalent (as a
groupoid) to ``parabolic Azumaya algebras'': parabolic sheaves of algebras
locally isomorphic to the parabolic sheaf of endomorphisms of a parabolic
vector bundle with denominator equal to the type of the order.  This seems to
hold no advantage (when the type is bounded as it is) over the formulation in
terms of root stacks.

\subsection{Globalization for terminal orders} \label{sec:globalization}

Let $\alpha$ be a terminal Brauer class over the function field of a smooth
surface $X$ in the sense of \cite{MR2180454}.  The ramification data of
$\alpha$ yield a simple normal crossings divisor $D=D_1+\cdots+D_m\subset X$,
and for each component $D_i$ a ramification degree $e_i|n$.  Let $\pi:\ms X\to
X$ be the smooth stack that is given by the fiber product (with respect to $i$)
of the root construction of order $e_i$ along $D_i$.  Let $\eta_i$ be the
generic point of $D_i$.  As in Section \ref{sec:hered-orders-over}, an Azumaya
algebra over $\ms X$ has associated representations over each
$\B\m_{e_i,\kappa(D_i)}$; call the representation associated to $D_i$ the
\emph{$i$th representation associated to $\ms A$\/}.

\begin{defn} An Azumaya algebra $\ms A$ on $\ms X$ is \emph{Brumer log terminal
  (blt)\/} if for every $i$ the local Azumaya algebra $\ms A_{\eta_i}$ has type
  $e_i$.  \end{defn}

Recall that a normal order with center $X$ and Brauer class $\alpha$ is called
\emph{terminal\/} in the notation of \cite{MR2180454}.

\begin{prop}\label{P:push forward get terminal} Pushforward by $\pi$ defines an
  equivalence of groupoids between blt Azumaya algebras on $\ms X$ and terminal
  orders on $X$ with Brauer class $\alpha$.  \end{prop}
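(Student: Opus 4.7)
The plan is to verify the equivalence étale-locally on $X$ and reduce it to Proposition~\ref{P:main-hered-az}. Both the groupoid of blt Azumaya algebras on $\ms X$ and the groupoid of terminal orders with Brauer class $\alpha$ form étale stacks on $X$, and $\pi$ commutes with étale base change, so it suffices to establish the equivalence over each strict Henselization $\ms O_{X,x}^{hs}$ for $x\in X$ a closed point. Pushforward sends a locally free sheaf on the smooth stack $\ms X$ to a reflexive coherent sheaf on the regular surface $X$, and a terminal order is reflexive by definition; so pushforward and its inverse (reflexive hull of generic data) can be controlled by working in codimension one.

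The analysis breaks into three cases according to the stratification of $D$. If $x\notin D$, then $\pi$ is an isomorphism étale-locally, a blt Azumaya algebra is simply an Azumaya algebra, and a normal order in an unramified Brauer class over a smooth surface point is itself Azumaya, so both sides coincide. If $x$ lies on a single component $D_i$, then $\ms X$ is étale-locally the $e_i$-th root construction along $D_i$ over $\ms O_{X,x}^{hs}$, and passage to the generic point $\eta_i$ reduces the problem to the Henselian dvr situation of Section~\ref{sec:hered-orders-over}, where Proposition~\ref{P:main-hered-az} applies. Extension from $\eta_i$ back to $x$ is forced by reflexivity on both sides, yielding the equivalence in this case.

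The main case, and the main obstacle, is a crossing $x\in D_i\cap D_j$. Étale-locally $\ms X$ is a fiber product of two one-variable root stacks, presented as a quotient by $\m_{e_i}\times\m_{e_j}$ of a regular cover of $\ms O_{X,x}^{hs}$, and an Azumaya algebra of degree $n$ on it corresponds (up to tensoring with a character) to a $\m_{e_i}\times\m_{e_j}$-equivariant locally free sheaf whose isotypic decomposition is controlled by its restrictions to the two codimension-one stacky loci. The blt condition forces each of these one-variable restrictions to be the regular representation of the corresponding factor, which pins down the full representation of the product group. One then verifies that the pushforward of the resulting Azumaya algebra matches the explicit étale-local model of a terminal order at a crossing of the ramification divisor given in \cite{MR2180454}, and conversely that every terminal order at a crossing arises from such an equivariant sheaf. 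This is the only step that genuinely requires a two-dimensional local computation beyond the dvr analysis of Proposition~\ref{P:main-hered-az}, and is where I expect the technical difficulty to concentrate.

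For full faithfulness, an automorphism of $\pi_\ast\ms A$ restricts to an automorphism of the generic fiber, a central simple algebra of Brauer class $\alpha$, and is therefore inner by Skolem--Noether. Preservation of the parabolic structures at each generic point $\eta_i$, which was established by Proposition~\ref{P:main-hered-az}, forces the conjugating element to come from an automorphism of $\ms A$ on the codimension-one locus of $\ms X$, and this automorphism extends uniquely across the isolated codimension-two points by reflexivity on the smooth stack $\ms X$. Combined with the case analysis above, this gives the desired equivalence.
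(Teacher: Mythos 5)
There is a genuine gap, and it sits exactly where you predicted: the crossing case. You reduce the codimension-one behavior to Proposition~\ref{P:main-hered-az} (correctly, and in parallel with the paper), but at a point $x\in D_i\cap D_j$ your argument is only a program (``one then verifies\ldots''), and the two claims you make about it are not right. First, an Azumaya algebra on the local model $[\spec \ms O[w_1,w_2]/(w_1^{e_i}-t_1,w_2^{e_j}-t_2)\,/\,\m_{e_i}\times\m_{e_j}]$ need not be $\send$ of an honest $\m_{e_i}\times\m_{e_j}$-equivariant sheaf: the residual gerbe $\B(\m_{e_i}\times\m_{e_j})$ has nontrivial Brauer classes (the cup-product pairings $\m_{e_i}\tensor\m_{e_j}\to\G_m$, which encode precisely the secondary ramification in \cite{MR2180454}), so in the ramified situation one is forced into twisted/projective representations. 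Second, even in the split case the blt condition does \emph{not} pin down the joint representation up to a character: with $n=4$, $e_i=e_j=2$, the character multiplicities $(m_{00},m_{01},m_{10},m_{11})=(2,0,0,2)$ and $(1,1,1,1)$ both restrict to twice the regular representation on each factor, and they are not related by twisting. So the crossing-point groupoid is not rigid in the way your sketch assumes, and matching it against the \'etale-local models of \cite{MR2180454} would be a genuine (and here unexecuted) two-dimensional computation.

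The missing idea is that this computation is unnecessary, and avoiding it is the actual content of the paper's proof. Since ``terminal'' means normal ($R_1$ and $S_2$) with the given class $\alpha$, there is no extra condition at crossings to match: the $S_2$ condition is reflexivity, and on the smooth two-dimensional stack $\widetilde{\phantom{X}}\!\ms X$ a reflexive algebra is locally free while $\pi_\ast$ of a locally free sheaf is reflexive. Hence both sides satisfy $A=\bigcap_{x\in X^{(1)}}A_x$ inside the generic algebra, $\pi_\ast$ commutes with these intersections, and likewise $\Isom(\ms A,\ms B)=\bigcap_{x\in X^{(1)}}\Isom(\ms A_x,\ms B_x)$ inside $\Isom(\ms A_\eta,\ms B_\eta)$. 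Essential surjectivity and full faithfulness therefore follow purely from the codimension-one statement, i.e., from Proposition~\ref{P:main-hered-az} at the $\eta_i$ together with the trivial unramified case; the structure at $D_i\cap D_j$ is reconstructed for free by taking the reflexive hull (a reflexive algebra on a smooth stack that is Azumaya in codimension one is Azumaya everywhere, since the determinant of $\ms A\tensor\ms A^\circ\to\send(\ms A)$ can only vanish along a divisor). Your full-faithfulness paragraph already gestures at this reflexivity mechanism; pushed through systematically, it deletes your ``main case'' entirely and repairs the proof.
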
 \begin{proof} The proof
  is mainly a routine globalization of Proposition \ref{P:main-hered-az}.  

  First, we have that the pushforward of any such $\ms A$ is normal, as we can
  check this locally at any codimension $1$ point, where this is an immediate
  consequence of Proposition \ref{P:main-hered-az}.  Thus, the pushforward of a
  blt Azumaya algebra is normal, as desired.  To show that $\pi_\ast$ is
  essentially surjective, note that since any maximal order $A$ is reflexive we
  have that $$A=\bigcap_{x\in X^{(1)}}A_x,$$ and similarly for blt Azumaya
  algebras on $\ms X$, where $X^{(1)}$ is the set of codimension $1$ points of
  $X$ and $A_x:=A\tensor\ms O_{X,x}$ is the localization.  Moreover, $\pi_\ast$
  commutes with the formation of intersections.  It thus suffices to prove the
  analogous result for localizations at codimension $1$ points (keeping track
  of the embedding in the generic algebras), which is precisely Proposition
  \ref{P:main-hered-az}.

  To show that $\pi_\ast$ is fully faithful, it suffices to prove the analogous
  statement upon replacing $X$ by its localization at $\eta_i$.  Indeed, since
  the maximal orders $A$ and the Azumaya algebras $\ms A$ are reflexive, we
  have that for any blt Azumaya algebras $\ms A$ and $\ms B$ with pushforwards
  $A$ and $B$ the isomorphisms are given by $$\Isom(\ms A,\ms B)=\bigcap_{x\in
    X^{(1)}}\Isom(\ms A_x,\ms B_x)$$ and $$\Isom(A,B)=\bigcap_{x\in
      X^{(1)}}\Isom(A_x,B_x)$$ where $X^{(1)}$ is the set of codimension $1$
      points of $X$ and the intersection takes place inside the set $\Isom(\ms
      A_\eta,\ms B_\eta)$ of isomorphisms of the generic algebras.  Since
      Propostion \ref{P:main-hered-az} shows that $\Isom(\ms A_x,\ms
      B_x)=\Isom(A_x,B_x)$, the result follows.  \end{proof}

In more classical terms, terminal orders are parabolic Azumaya algebras with
parabolic structure along the ramification divisor.

\section{Na\"ive relative maximal orders} \label{sec:naive-relat-maxim}

\subsection{Definitions and basic geometric properties}
\label{sec:defin-basic-geom}

\begin{defn} Let $Z$ be an integral algebraic space.  A torsion free coherent
  sheaf $\ms A$ of $\ms O_Z$-algebras is a \emph{maximal order} if any
  injective morphism $\ms A\to\ms B$ of torsion free $\ms O_Z$-algebras that is
  an isomorphism over a dense open subspace $U\subset Z$ is an isomorphism.
\end{defn}

We will prove that maximality in a family is a fiberwise condition.

\begin{defn} Given a morphism $X\to S$ with locally Noetherian geometric
  fibers, an \emph{$S$-flat family of coherent sheaves\/} is an $S$-flat
  quasi-coherent $\ms O_X$-module $\ms F$ of finite presentation.  If $X$ has
  integral fibers, we will say that a possibly non-flat quasi-coherent $\ms
  O_X$-module of finite presentation $\ms G$ is \emph{torsion free\/} if its
  geometric fibers $\ms G_s$ are torsion free coherent $\ms O_{X_s}$-modules.
\end{defn}

\begin{defn} Given a flat morphism $X\to S$ with integral fibers, an $S$-flat
family of coherent $\ms O_X$-algebras $\ms A$ is \begin{enumerate} \item a
      \emph{relative maximal order\/} if for any $T\to S$ and any injective
      morphism $\ms A_T\to\ms B$ into a torsion free $\ms O_{X_T}$-algebra that
      is an isomorphism over a fiberwise dense open subspace $U\subset X_T$ is
    an isomorphism; \item a \emph{relative normal order\/} if the geometric
      fibers $\ms A$ are $R_1$ and $S_2$, in the sense of \cite{MR2180454}.
  \end{enumerate} \end{defn}

While relative normality is defined as a fiberwise condition, relative
maximality is not obviously so.  Let us prove this.

\begin{lem}\label{L:algclbase} Suppose $X$ is a proper integral algebraic space
  over an algebraically closed field $k$.  A coherent sheaf $\ms A$ of $\ms
  O_X$-algebras is a maximal order on $X$ if and only if it is a relative
  maximal order on $X/\spec k$.  In particular, for any field extension $K/k$
  we have that $\ms A\tensor K$ is a maximal order on $X\tensor K$.  \end{lem}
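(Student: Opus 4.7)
The ``if'' direction is immediate: specialize the relative maximality hypothesis to $T=\Spec k$. For the converse I plan to reduce the vanishing of $\ms Q:=\ms B/\ms A_T$ to a pointwise check on closed fibers of $T$, where I can invoke the absolute maximality of $\ms A$ on $X$.

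Since $\ms B$ is of finite presentation and the question is local on $T$, standard limit arguments reduce us to the case that $T$ is affine and of finite type over $k$. Properness of $X_T\to T$ then implies that the image of $\operatorname{Supp}(\ms Q)$ in $T$ is closed, so (as closed points are dense in $T$) it suffices to show $\ms Q|_{X_t}=0$ for every closed point $t\in T$; because $k$ is algebraically closed, such a $t$ has residue field $k$, and $X_t=X$. Restricting the short exact sequence $0\to\ms A_T\to\ms B\to\ms Q\to 0$ to the fiber over $t$ and using that $\ms A_T$ is $T$-flat (being pulled back from $\Spec k$) gives an exact sequence
$$0\to\mc K\to\ms A\to\ms B_t\to\ms Q_t\to 0$$
on $X$, where $\mc K:=\ker(\ms A\to\ms B_t)$. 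On the dense open $U_t:=U\cap X_t\subset X$ the map $\ms A_T\to\ms B$ was an isomorphism, so $\mc K|_{U_t}=0$; thus $\mc K$ is a subsheaf of the torsion-free $\ms O_X$-module $\ms A$ supported in codimension at least one on the integral space $X$, which forces $\mc K=0$. The resulting injection $\ms A\hookrightarrow\ms B_t$ goes into the torsion-free $\ms O_X$-algebra $\ms B_t$ and is an isomorphism on $U_t$, so the absolute maximality of $\ms A$ on $X$ gives $\ms A=\ms B_t$; that is, $\ms Q_t=0$.

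The hard part is precisely the vanishing of $\mc K$: it is what converts the fiberwise torsion-freeness of the possibly non-$T$-flat $\ms B$ into the absolute torsion-freeness of $\ms B_t$ as an overalgebra of $\ms A$ on $X$, which is the input required by absolute maximality. The consequence for a field extension $K/k$ then follows by applying the main statement to $T=\Spec K$, after descending the given data to a finitely generated $k$-subalgebra $R\subset K$ over whose spectrum the given open remains fiberwise dense.
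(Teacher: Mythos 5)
Your proof is correct and follows essentially the same route as the paper's: a standard limit/spreading-out reduction to a finite-type base over $k$, then absolute maximality invoked on the dense set of closed fibers (where the residue field is $k$ and the fiber is $X$ itself), with properness of $X_T\to T$ used to propagate the fiberwise vanishing to all of $T$. The differences are cosmetic --- the paper first reduces to geometric points and tracks the open locus where $\phi$ is an isomorphism, whereas you track $\operatorname{Supp}(\ms B/\ms A_T)$, and your torsion-freeness argument for $\mc K=0$ makes explicit the injectivity of $\ms A\to\ms B_t$ on closed fibers that the paper asserts without comment.
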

\begin{proof} Since any relatively maximal order is obviously maximal, it
  suffices to assume that $\ms A$ is maximal and prove that it is relatively
  maximal.  Suppose $\ms A_T\to\ms B$ is an injective map to a torsion free
  $\ms O_{X_T}$-algebra that is an isomorphism over the fiberwise dense open
  $U\subset X_T$.  For any geometric point $\spec K\to T$, the base change $\ms
  A_K\to\ms B_K$ is thus injective and an isomorphism over a dense open of the
  scheme $X_K$.  If we can show that this restricted map is always an
  isomorphism then the result is proven.  Thus, we are reduced to the case in
  which $T=\spec K$ with $K$ an algebraically closed extension field of $k$.

  Since $\ms B$ is of finite presentation, we may assume by a standard limit
  argument that there is a finite type integral $k$-scheme $T'\to\spec k$, a
  torsion free algebra $\ms B'$ over $T'$ with an injective map $\phi:\ms
  A_{T'}\to\ms B'$, and a dominant morphism $\spec K\to T'$ such that the base
  change of $\phi$ isomorphic to the given inclusion $\ms A_T\to\ms B$.  The
  locus over which $\phi$ is an isomorphism is an open subscheme $U'\subset
  X_{T'}$ whose restriction to the geometric generic fiber over $T'$ is
  non-empty.  By Chevalley's theorem the image of $U'$ in $T'$ is
  constructible, hence contains a dense open, whence shrinking $T'$ we may
  assume that $U'$ is dense in every fiber.  But now $T'$ has a dense set of
  $k$-points (as it is of finite type over an algebraically closed field), and
  we know by assumption that for any such point $t'\in T'$ the restriction $\ms
  A_{t'}\inj\ms B_{t'}$ is an isomorphism.  We conclude that $U'=T'$, which
  finishes the proof that $\ms A$ is a relative maximal order.  \end{proof}

\begin{remark}\label{R:base field funny} Note that if the base field $k$ is not
  assumed to be algebraically closed, the result of Lemma \ref{L:algclbase} is
  false.  Indeed, there are Brauer classes on varieties $X$ over a field $k$
  which are ramified but become unramified over the algebraic closure of $k$.
  Any maximal order over $k$ will be geometrically hereditary but non-maximal
  at the generic points of the preimage of the ramification divisor in
  $X\tensor\widebar k$.  A simple example is furnished by the quaternion
  algebra $(x,a)$ over $k(x,y)$, where $a$ is a non-square element of $a$.
  This gives a ramified algebra on $\P^2$ whose base change to $\widebar k$ is
  trivial, and it follows that no maximal order can be relatively maximal.
\end{remark}

\begin{prop}\label{P:rmaxope} Suppose $X\to S$ is a flat morphism of finite
  presentation between algebraic spaces whose geometric fibers are integral.
  An $S$-flat family of torsion free coherent $\ms O_X$-algebras $\ms A$ is a
  relative maximal order if and only if for every geometric point $s\to S$ the
  fiber $\ms A_s$ is a maximal order on the integral $\kappa(s)$-space $X_s$.
\end{prop}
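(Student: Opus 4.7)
The plan is to verify the two implications separately. The forward direction is immediate: if $\ms A$ is a relative maximal order, then taking $T=\spec\kappa(s)$ for each geometric point $s\to S$ in the definition shows that $\ms A_s$ is a maximal order on $X_s$. All content lies in the converse. Assume every geometric fiber $\ms A_s$ is maximal and let $\ms A_T\inj\ms B$ be an injection into a torsion-free $\ms O_{X_T}$-algebra that is an isomorphism over a fiberwise dense open $U\subset X_T$. I would introduce the cokernel $\ms Q:=\ms B/\ms A_T$, which is of finite presentation since $\ms A_T$ is (being a flat pullback of the finitely presented $\ms A$) and so is $\ms B$ (by the definition of torsion-free), and reduce the problem to showing $\ms Q=0$.

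The argument then splits into a fiberwise step and a globalization step. For the globalization, once I know $\ms Q_t=0$ for every geometric point $t\to T$, a stalkwise Nakayama argument on $X_T$ finishes the job: for each $x\in X_T$ lying over $t\in T$ the stalk $\ms Q_x$ is finitely generated over the local ring $\ms O_{X_T,x}$, and one has $(\ms Q_t)_x=\ms Q_x/\mf m_t\ms Q_x=0$, while $\mf m_t\ms O_{X_T,x}\subset\mf m_x$ gives $\mf m_x\ms Q_x=\ms Q_x$, hence $\ms Q_x=0$.

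The main obstacle is the fiberwise step, because $\ms B$ is not assumed $T$-flat and so the map $\ms A_t\to\ms B_t$ is a priori neither injective nor generically an isomorphism, which blocks a direct appeal to the maximality hypothesis on $\ms A_t$. The key observation will be that the kernel $K$ of $\ms A_t\to\ms B_t$ is a coherent subsheaf of the torsion-free $\ms O_{X_t}$-module $\ms A_t$ whose restriction to $U_t:=U\cap X_t$ is zero (since the isomorphism over $U$ pulls back to an isomorphism over $U_t$). Because $X_t$ is integral and $U_t\subset X_t$ is dense, $K$ has zero stalk at the generic point of $X_t$, and torsion-freeness of $\ms A_t$ forces $K=0$. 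With injectivity $\ms A_t\inj\ms B_t$ in hand, the fiberwise maximality of $\ms A_t$, applied to the torsion-free $\ms O_{X_t}$-algebra $\ms B_t$ and the dense open $U_t$, gives $\ms A_t=\ms B_t$, so $\ms Q_t=0$. The conceptual content is that torsion-freeness of the lower algebra, rather than flatness of the upper one, is what lets the fiberwise hypothesis reach an arbitrary extension.
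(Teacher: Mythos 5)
Your proof is correct and takes essentially the same route as the paper's: the paper likewise deduces injectivity of the fiber map $\iota_s\colon\ms A_s\to\ms B_s$ and its generic isomorphy from torsion-freeness of the fibers and fiberwise density of $U$, applies maximality of the fiber, and concludes that $\iota$ is surjective by Nakayama. The only difference is bookkeeping: you work directly over an arbitrary $T$ (using that the fiber of $\ms A_T$ at a geometric point $t\to T$ is the fiber of $\ms A$ at the composite geometric point $t\to S$), whereas the paper first reduces to $T=S$ via Lemma \ref{L:algclbase}; note that if one reads the hypothesis as only concerning fibers over algebraic closures of residue fields of $S$, your appeal to ``fiberwise maximality of $\ms A_t$'' for a possibly larger algebraically closed field $\kappa(t)$ needs exactly the last assertion of Lemma \ref{L:algclbase}, which you should cite there.
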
 \begin{proof} It follows immediately from the definition that the
  geometric fibers of a relative maximal order are maximal.  To prove the other
  implication, by Lemma \ref{L:algclbase} it suffices to assume that the
  geometric fibers are maximal and show that $\ms A$ is maximal (i.e., we may
  assume that $T=S$; lifting geometric points to $T$ by taking field extensions
  does not disturb the hypotheses by Lemma \ref{L:algclbase}).

  Suppose $\iota:\ms A\to\ms B$ is an injection into a torsion free $\ms
  O_X$-algebra that is an isomorphism over a fiberwise dense open $U\subset X$.
  To prove that $\iota$ is an isomorphism it suffices to work locally on $S$,
  so we can assume that $S=\spec A$ for $A$ a local ring whose closed point $s$
  is the image of a geometric point over which $\ms A$ is maximal.  Since
  $\iota$ is an isomorphism over a fiberwise dense open and $\ms A$ and $\ms B$
  have torsion free fibers, the reduction $\iota_s:\ms A_s\to\ms B_s$ is
  injective and an isomorphism over a dense open.  Since $\ms A_s$ is maximal
  (as follows immediately from the same being true of its base change to
  $\widebar\kappa(s)$), we conclude that $\iota_s$ is an isomorphism.  By
  Nakayama's Lemma, we have that $\iota$ is surjective, whence it is an
  isomorphism, as desired.  \end{proof}

\begin{cor} Suppose $X$ is a smooth projective surface over a field $k$ and $D$
  is a central division algebra over its function field.  Let $k\to R\surj k$
  be an Artinian $k$-algebra with residue field $k$.  Given a maximal order
  $\ms A\subset D$, any infinitesimal deformation of $\ms A$ over $X\tensor_k
  R$ is a maximal order in the generic algebra $D\tensor_k R$.  \end{cor}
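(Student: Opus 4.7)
The plan is to deduce maximality of the deformation $\ms A_R$ from maximality of its closed fiber $\ms A$ by Nakayama's lemma, using the torsion-free hypothesis on competitors to control the reduction modulo $\mf m_R$.

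Suppose $\ms A_R \subset \ms B$ is an inclusion of coherent $\ms O_{X\tensor_k R}$-subalgebras of $D\tensor_k R$, with $\ms B$ torsion free in the sense of having torsion-free geometric fiber over $\spec R$. First I would show that $\ms B \cap (D\tensor_k\mf m_R) = \mf m_R \ms B$, yielding an inclusion of $\ms O_X$-algebras $\ms B/\mf m_R\ms B \hookrightarrow D$. Both sides agree at the generic point of $X$---each equals $D\tensor_k\mf m_R$ there, since the generic stalk of $\ms B$ is $D\tensor R$---so any discrepancy is a generically trivial coherent subsheaf of $\ms B/\mf m_R\ms B$, hence $\ms O_X$-torsion. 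Tensoring with $\widebar k$ preserves torsion, so the torsion-free hypothesis on $\ms B\tensor_R\widebar k$ forces this discrepancy to vanish.

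With the embedding $\ms B/\mf m_R\ms B \hookrightarrow D$ in place, its image is an $\ms O_X$-subalgebra of $D$ containing the reduction of $\ms A_R$, which by $R$-flatness of the deformation equals $\ms A_R/\mf m_R\ms A_R = \ms A$. The maximality of $\ms A$ in $D$ then gives $\ms B/\mf m_R\ms B = \ms A$, equivalently $\ms B = \ms A_R + \mf m_R \ms B$. Applied stalk-wise to the coherent $\ms O_{X\tensor_k R}$-module $\ms B/\ms A_R$---whose reduction modulo $\mf m_R$ vanishes by the previous sentence---Nakayama's lemma yields $\ms B = \ms A_R$.

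The main obstacle is the use of the torsion-free hypothesis in the first step, to establish $\ms B \cap (D\tensor_k\mf m_R) = \mf m_R \ms B$. Without it, one can construct strict extensions of the form $\ms B = \ms A_R + \ms I\tensor_k\mf m_R$ (at least for $R$ with $\mf m_R^2 = 0$), where $\ms I$ is any reflexive sub-$\ms A$-bimodule of $D$ strictly containing $\ms A$; the torsion-free condition excludes such extensions precisely because the support of $\ms I/\ms A$ yields $\ms O_X$-torsion in the geometric fiber.
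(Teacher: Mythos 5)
Your proof is correct and is in substance the paper's own argument: the paper disposes of this corollary with the one-line appeal ``There's only one geometric fiber!'' to Proposition~\ref{P:rmaxope}, and what you have written is exactly that proposition's mechanism inlined for an Artinian base --- reduce modulo $\mf m_R$, use fiberwise torsion-freeness of the competitor to embed its reduction as a subalgebra of $D$ containing $\ms A$ (your step $\ms B\cap(D\tensor_k\mf m_R)=\mf m_R\ms B$), invoke maximality of $\ms A$, and finish with Nakayama, here just nilpotence of $\mf m_R$. Your closing observation that the fiberwise torsion-free condition on competitors is indispensable --- otherwise $\ms A_R+\ms I\tensor_k\mf m_R$ with $\ms I$ a reflexive over-bimodule such as $\ms A(E)$ gives a strictly larger order --- is precisely the point of the paper's definition of relative maximality, and your variant has the minor merit of using only maximality of $\ms A$ over $k$ itself rather than of $\ms A\tensor\widebar k$.
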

\begin{proof} There's only one geometric fiber!  \end{proof}

\begin{prop}\label{P:overlocring} Suppose $X\to S=\Spec A$ is an algebraic
  space of finite presentation with integral fibers over a local ring $A$ with
  residue field $\kappa$.  An $A$-flat family of torsion free $\ms
  O_X$-algebras $\ms A$ is a relative maximal order if and only if its
  geometric closed fiber is a maximal order on $X\tensor\widebar\kappa$.
\end{prop}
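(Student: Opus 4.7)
The ``only if'' direction is immediate from Proposition \ref{P:rmaxope}: relative maximality implies every geometric fiber is maximal, in particular the closed one.  For the ``if'' direction, Proposition \ref{P:rmaxope} reduces matters to showing that every geometric fiber of $\ms A$ is maximal whenever the closed one is; the plan is to propagate maximality from the closed fiber to other fibers by specialization through a trait.

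Given any geometric point $\bar s\to S$ with image $s\in S$, the locality of $A$ ensures $s_0\in\overline{\{s\}}$, so we may choose a discrete valuation ring $R$ and a morphism $\Spec R\to\Spec A$ whose generic point maps to $s$ and whose closed point maps to $s_0$.  Pulling $\ms A$ back along this morphism gives an $R$-flat family whose geometric closed fiber is still maximal by Lemma \ref{L:algclbase}; if we can show that its geometric generic fiber is also maximal, one more application of Lemma \ref{L:algclbase} yields maximality of $\ms A_{\bar s}$.  We are thus reduced to the case $A=R$ a discrete valuation ring.

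In the DVR case, suppose for contradiction that the geometric generic fiber is not maximal, so that there is a proper injection $\ms A_{\bar\eta}\hookrightarrow\ms B'$ of torsion-free algebras that is an isomorphism over a dense open.  A standard limit argument together with passage to a finite DVR extension produces a coherent, $R$-flat, torsion-free $\ms O_{X_R}$-subalgebra $\ms B_R\subset\ms A_{\eta_R}$ properly containing $\ms A_R$ and recovering (a descent of) $\ms B'$ on the geometric generic fiber --- for instance, $\ms B_R$ can be taken to be the torsion-free saturation inside $\ms A_{\eta_R}$ of the $\ms O_{X_R}$-subalgebra generated by $\ms A_R$ and finitely many $\ms A_K$-module generators of $\ms B_K$, where $K$ is the fraction field of $R$.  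Now apply the Nakayama step from the proof of Proposition \ref{P:rmaxope} to $\ms A_R\hookrightarrow\ms B_R$: the hypothesis that the geometric closed fiber is maximal (together with Lemma \ref{L:algclbase}) forces the closed-fiber restriction to be an isomorphism, and combined with $R$-flatness of both sheaves and coherence of the cokernel over $\ms O_{X_R}$ this yields $\ms B_R=\ms A_R$, contradicting $\ms B_K\supsetneq\ms A_K$.

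The main obstacle is the construction of the lattice $\ms B_R$ with the correct torsion-freeness on its closed fiber, and the final step upgrading closed-fiber vanishing of the cokernel to global vanishing.  These are technical ``lattice''-type constructions that form the crux of the proof; everything else is either a standard specialization argument through a trait or a direct application of Proposition \ref{P:rmaxope} and Lemma \ref{L:algclbase}.
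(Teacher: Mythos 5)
Your overall architecture is the paper's: reduce via Proposition \ref{P:rmaxope} and Lemma \ref{L:algclbase} to the case of a discrete valuation ring $R$, extend a putative counterexample on the generic fiber to a coherent torsion-free family over all of $X_R$, and run the Nakayama step from the proof of Proposition \ref{P:rmaxope} against the maximal closed fiber. But the step you yourself flag as the crux --- the construction of the lattice $\ms B_R$ --- is exactly where your ``for instance'' recipe has a genuine gap. Taking the $\ms O_{X_R}$-subalgebra generated by $\ms A_R$ and unnormalized generators $b_i$ of $\ms B_K$ fails in two ways. First, that subalgebra need not be coherent: if some $b_i$ has a pole along the closed fiber (say $b=t^{-1}e$ with $e$ idempotent), its powers $t^{-m}e$ show the generated algebra is not finite. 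Second, and more importantly, nothing in your construction prevents the cokernel $Q$ of $\ms A_R\inj\ms B_R$ from being supported on the entire closed fiber; in that case the closed-fiber map $\ms A_s\to(\ms B_R)_s$ is neither injective (its kernel is the $t$-torsion of $Q$) nor an isomorphism over a dense open of $X_s$, so the maximality of $\ms A_s$ --- which only applies to injections that are isomorphisms over a dense open --- gives nothing, and the Nakayama step cannot start. Torsion-free saturation of the closed fiber does not repair this, since such a cokernel can be locally free on $X_s$.

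The missing idea, built explicitly into the paper's proof, is to normalize the extension at the generic point $\gamma$ of the closed fiber: the paper only considers extensions of $\ms B_\eta$ whose localization at $\gamma$ equals $\ms A_\gamma$ via the natural inclusion. In your language this amounts to first rescaling the generators: because the closed fiber is integral, $t$ is a uniformizer of the dvr $\ms O_{X_R,\gamma}$, and the codimension-one points of $X_R$ are precisely $\gamma$ together with the codimension-one points of the generic fiber; hence $t^{N}b_i\in\ms A_\gamma$ for $N\gg 0$, and replacing $b_i$ by $t^{N}b_i$ leaves $\ms B_K$ unchanged. With $\ms B_{R,\gamma}=\ms A_\gamma$ in hand, the cokernel avoids $\gamma$, so it meets the closed fiber in a proper closed subset; after saturation the fibers are torsion free, the closed-fiber comparison is injective and generically an isomorphism, and your Nakayama argument then runs verbatim. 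The rest of your proposal (the specialization through a trait, and the finite dvr extension to handle the geometric generic fiber, which the paper does by localizing the normalization in an extension of the fraction field) matches the paper.
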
 \begin{proof} We may suppose that $A$ is Noetherian and reduced.  By
  Proposition \ref{P:rmaxope}, it suffices to prove that the geometric fibers
  are all maximal, which immediately reduces us by a pullback argument and
  Lemma \ref{L:algclbase} to showing that if $A$ is a discrete valuation ring
  with algebraically closed residue field then the generic fiber of $\ms A$ is
  a maximal order (in the absolute sense).  

  Let $\ms A_\eta\inj\ms B_\eta$ by an injection into a torsion free $\ms
  O_{X_\eta}$-algebra that is an isomorphism over the generic point of $\ms
  X_\eta$.  Let $\gamma\in X$ be the generic point of the closed fiber and let
  $\delta\in X$ be the generic point of $X$.  Considering localizations as
  quasi-coherent sheaves on $X$, we can focus on quasi-coherent sheaves of
  algebras containing $\ms A$ whose localizations at $\gamma$ are isomorphic to
  $\ms A_\gamma$ via the natural inclusion.  A standard argument shows that
  there is a coherent such algebra $\ms B$ extending $\ms B_\eta$; saturating
  if necessary, we may assume that $\ms B$ has torsion free fibers.  This
  produces a family $\ms A\inj\ms B$ over all of $X$ which is an isomorphism
  over a fiberwise dense open subscheme.  Reducing to $\kappa$ as in the proof
  of Proposition \ref{P:rmaxope}, we conclude that $\ms A\to\ms B$ is an
  isomorphism, whence the original map $\ms A_\eta\inj\ms B_\eta$ is an
  isomorphism, showing that $\ms A_\eta$ is maximal.  (Applying the same
  argument to a localization of the normalization in any extension of the
  fraction field of $A$ shows that the geometric generic fiber of $\ms A$ is
  maximal.) \end{proof}

Let $f:Z\to S$ be a flat morphism of finite presentation between algebraic
spaces with integral geometric fibers and $\ms A$ an $S$-flat torsion free $\ms
O_Z$-algebra of finite presentation.  Define a subfunctor
$\operatorname{Az}_{\ms A}\subset Z$ parametrizing morphisms $T\to Z$ such that
$\ms A_T$ is Azumaya.  \begin{lem} The map of functors $\operatorname{Az}_{\ms
  A}\inj Z$ is a quasi-compact open immersion.  \end{lem}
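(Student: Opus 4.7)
The plan is to exhibit $\operatorname{Az}_{\ms A}$ as the intersection of two subfunctors of $Z$, each of which is representable by an open subspace whose closed complement is cut out by an ideal of finite type. Recall that a sheaf of $\ms O_Z$-algebras is Azumaya precisely when
\begin{enumerate}
\item[(i)] $\ms A$ is a finite locally free $\ms O_Z$-module, and
\item[(ii)] the natural multiplication map
$$\mu\colon\ms A\tensor_{\ms O_Z}\ms A^{op}\longrightarrow\send_{\ms O_Z}(\ms A)$$
is an isomorphism.
\end{enumerate}
Both conditions are local on $Z$ and stable under arbitrary base change $T\to Z$, so writing $U_1\subset Z$ for the locus where (i) holds and $U_2\subset U_1$ for the locus (inside $U_1$) where (ii) holds, one has $\operatorname{Az}_{\ms A}=U_2$ as subfunctors of $Z$.

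First I would handle (i). Since $\ms A$ is of finite presentation on $Z$, the locus where it is locally free of any given rank is an open subspace whose complement is cut out by a Fitting ideal of finite type; taking the union over ranks gives the open $U_1\subset Z$.

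Next I would handle (ii) over $U_1$. On $U_1$ both $\ms A\tensor\ms A^{op}$ and $\send(\ms A)$ are finite locally free of the same rank (namely the square of the rank of $\ms A$), so the locus where $\mu$ is an isomorphism coincides with the locus where it is surjective, which is open and whose complement is cut out by a finitely generated ideal (locally, the vanishing of the determinant of $\mu$ in any trivialization). Intersecting with $U_1$ gives $U_2$, which represents $\operatorname{Az}_{\ms A}$. Quasi-compactness of the open immersion $\operatorname{Az}_{\ms A}\inj Z$ follows because in any affine \'etale chart of $Z$ the complement of $U_2$ is the vanishing of a finitely generated ideal.

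I do not expect a serious obstacle: the argument is entirely local on $Z$ and makes no use of $S$. The only point that requires care is the order in which the two conditions are imposed, since the openness of the locus where $\mu$ is an isomorphism is most easily seen after $\ms A$ has already been arranged to be locally free; treating (i) before (ii) avoids any issue.
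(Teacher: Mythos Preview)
Your argument for openness is essentially the paper's: restrict to the locally free locus, then cut out the vanishing of $\operatorname{coker}(\mu)$. The difference is in how quasi-compactness is obtained, and here your argument has a genuine gap.

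Your claim that the locus where $\ms A$ is locally free of a given rank $r$ has complement cut out by ``a Fitting ideal of finite type'' is not correct. That locus is
\[
\{\,p:\operatorname{Fitt}_r(\ms A)_p=\ms O_{Z,p}\ \text{and}\ \operatorname{Fitt}_{r-1}(\ms A)_p=0\,\},
\]
and while the first condition is simply $D(\operatorname{Fitt}_r)$, the complement of the second is $\operatorname{Supp}(\operatorname{Fitt}_{r-1})=V(\operatorname{Ann}(\operatorname{Fitt}_{r-1}))$, where the annihilator of a finitely generated ideal need not be finitely generated over a non-Noetherian base. In fact the phenomenon you need to rule out really occurs: over $A=k[x_1,x_2,\dots]/(x_1x_i:i\ge 2)$ the finitely presented module $M=A/(x_1)$ has locally free locus equal to $\spec A\setminus\{(x_1,x_2,\dots)\}$, and this open is \emph{not} quasi-compact (no finitely generated ideal has radical $(x_1,x_2,\dots)$). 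So a purely Fitting-ideal argument cannot yield quasi-compactness of $U_1\hookrightarrow Z$ in general, and hence not of $U_2\hookrightarrow Z$ either.

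The paper handles this by absolute Noetherian approximation: one descends $Z\to S$ and $\ms A$ to $Z_0\to S_0$ with $S_0$ of finite type over $\Z$, so that $Z_0$ is Noetherian and every open subspace of $Z_0$ is automatically quasi-compact. Since the Azumaya condition is stable under base change one has $\operatorname{Az}_{\ms A}=\operatorname{Az}_{\ms A_0}\times_{Z_0}Z$, and quasi-compactness of the open immersion then follows by base change from the Noetherian case. Your step~(ii) on $U_1$ is fine and matches the paper's use of $\mu$; it is only the reduction guaranteeing quasi-compactness that is missing.
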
 \begin{proof} By
  absolute Noetherian approximation, there is an algebraic space $S_0$ of
  finite type over $\Z$, flat morphism $Z_0\to S_0$ of finite type with
  integral geometric fibers, and a morphism $S\to S_0$ such that the pullback
  of $Z_0$ to $S$ is isomorphic to $Z$.  Since $\ms A$ is of finite
  presentation, we can assume that $\ms A$ is defined on $Z_0$.  Now, since
  $Z_0$ is Noetherian any open subscheme is quasi-compact.  Thus, it suffices
  to prove that $\operatorname{Az}_{\ms A}\inj Z$ is open to conclude that it
  is quasi-compact.

  Since the locus over which $\ms A$ is locally free is open and contains
  $\operatorname{Az}_{\ms A}$, we may shrink $Z$ and assume that $\ms A$ is
  locally free.  Consider the morphism of locally free sheaves $\mu:\ms
  A\tensor\ms A^\circ\to\send(\ms A)$ given by left and right multiplication.
  We know that $\ms A_T$ is Azumaya if and only if $\mu_T$ is an isomorphism,
  identifying $\operatorname{Az}_{\ms A}$ with the functor of points on which
  $\mu$ is an isomorphism.  But this is equivalent to the cokernel of $\mu$
  vanishing, which is clearly an open condition.  \end{proof}

By Chevalley's theorem, the image of $\operatorname{Az}_{\ms A}$ in $S$ is a
constructible set $\operatorname{gAz}_{\ms A}\subset |S|$.

\begin{defn} The set $\operatorname{gAz}_{\ms A}$ will be called the
  \emph{central simple locus\/} of $\ms A$.  \end{defn}

The constructible central simple locus has two nice properties.  First, it is
open.

\begin{prop}\label{P:csl-open} Let $Z\to S$ be a proper morphism of finite
  presentation between algebraic spaces with integral geometric fibers.  Given
  a relative maximal order $\ms A$ on $Z$, the central simple locus of $\ms A$
  is open.  \end{prop}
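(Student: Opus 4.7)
The plan is to identify $\operatorname{gAz}_{\ms A}$ with the locus of points $s\in S$ over which a certain closed subspace of $Z$ fails to fill up the fiber $Z_s$, and then to invoke upper semicontinuity of fiber dimension.

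First I would establish the following reformulation: a point $s\in S$ lies in $\operatorname{gAz}_{\ms A}$ if and only if the generic point of the integral fiber $Z_s$ lies in $\operatorname{Az}_{\ms A}$. One direction is immediate from the definition of $\operatorname{gAz}_{\ms A}$ as the image of $\operatorname{Az}_{\ms A}$. For the other, the preceding lemma gives that $\operatorname{Az}_{\ms A}\subset Z$ is open, so its intersection with the irreducible fiber $Z_s$ is either empty or a dense open subset; in the latter case it must contain the generic point.

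Next, let $W=Z\setminus\operatorname{Az}_{\ms A}$ equipped with its reduced induced structure, and let $g\colon W\to S$ be the restriction of $f\colon Z\to S$. The previous step rewrites the condition $s\in\operatorname{gAz}_{\ms A}$ as the statement that $W_s$ does not contain the generic point of $Z_s$, equivalently that $\dim W_s<\dim Z_s$. Since $W$ is closed in $Z$ and $f$ is proper, $g$ is also proper; by upper semicontinuity of fiber dimension applied to $g$, for each integer $d$ the subset
\[
\{w\in W:\dim_w g^{-1}(g(w))\ge d\}
\]
is closed in $W$, and its image under the proper map $g$ is the closed subset $\{s\in S:\dim W_s\ge d\}\subset S$.

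To finish, I would invoke local constancy of $s\mapsto\dim Z_s$ on $S$, which follows from the fact that $f$ is flat of finite presentation with geometrically integral (hence equidimensional) fibers. Working locally one may fix $d=\dim Z_s$, and then $\operatorname{gAz}_{\ms A}=\{s:\dim W_s<d\}$ is the complement of a closed set, hence open. The main technical ingredient is the combination of semicontinuity of $\dim W_s$ with local constancy of $\dim Z_s$; once these are in hand the argument is purely formal. I note in passing that the maximality hypothesis on $\ms A$ does not seem to enter the argument above directly -- it shows up only implicitly, via the fact that on the central simple locus the generic fiber of $\ms A_s$ is automatically a central simple algebra, thanks to Proposition \ref{P:rmaxope}.
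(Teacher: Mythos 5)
Your proof is correct, but it follows a genuinely different route from the paper's. The paper reduces (via compatibility with base change and the constructibility of $\operatorname{gAz}_{\ms A}$ already furnished by Chevalley's theorem) to checking stability under generization over a dvr $R$: if the closed point of $\spec R$ lies in $\operatorname{gAz}_{\ms A}$, then at the generic point $\eta$ of the closed fiber the stalk $\ms A_\eta$ is finite and flat over the dvr $\ms O_{Z,\eta}$, the multiplication map $\ms A_\eta\tensor\ms A_\eta^\circ\to\send(\ms A_\eta)$ is an isomorphism modulo the uniformizer because $\ms A\tensor\kappa(\eta)$ is central simple, and Nakayama's lemma then makes it an isomorphism, so the generic stalk of $\ms A$ is central simple and the generic point of $S$ lies in $\operatorname{gAz}_{\ms A}$. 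You instead argue topologically: identify $\operatorname{gAz}_{\ms A}$ with $\{s:\dim W_s<\dim Z_s\}$, where $W$ is the closed complement of the Azumaya locus, then push Chevalley's upper semicontinuity of fiber dimension down along the proper map $W\to S$ and invoke local constancy of $s\mapsto\dim Z_s$. The trade-offs: your argument dispenses with the dvr reduction and the algebra entirely, needing only the openness of $\operatorname{Az}_{\ms A}$ from the preceding lemma; but it uses properness essentially, and twice --- for closedness of $\{s:\dim W_s\ge d\}$, and for the local constancy of fiber dimension, which is the standard statement for \emph{proper} flat finitely presented families with geometrically integral fibers, so you should cite it in that form rather than attributing it to flatness and equidimensionality of fibers alone --- whereas the paper's proof never actually uses properness of $Z\to S$ and so in effect proves a slightly stronger statement. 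Two small points to tighten: flatness of $Z\to S$ does not appear in the proposition's wording but is part of the ambient setup (it is baked into the definition of a relative maximal order and into the paragraph defining $\operatorname{Az}_{\ms A}$), and it should be made explicit since both your density argument and the local constancy use it; and, as in the paper, a preliminary reduction to Noetherian $S$ (using finite presentation and compatibility of $\operatorname{gAz}_{\ms A}$ with base change) is worth recording before forming the reduced induced structure on $W$. Your closing observation is accurate, and applies equally to the paper's proof: maximality enters only through the $S$-flatness and fibrewise torsion-freeness packaged into the definition.
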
 \begin{proof} Since the formation of
  $\operatorname{gAz}_{\ms A}$ is compatible with base change and $\ms A$ is of
  finite presentation, we immediately reduce to the case in which $S$ is
  Noetherian.  Now, since $\operatorname{gAz}_{\ms A}$ is constructible, to
  show that it is open it suffices to prove it under the additional assumption
  that $S=\spec R$ is the spectrum of a discrete valuation ring and that
  $\operatorname{gAz}_{\ms A}$ contains the closed point of $S$.  Let $\eta$ be
  the generic point of the closed fiber of $Z$ over $S$.  The localization $\ms
  A_\eta$ is a finite flat algebra over the discrete valuation ring $\ms
  O_{Z,\eta}$.  (The latter is a dvr because the fiber is integral, so the
  uniformizing parameter on $S$ is also a uniformizer in $\ms O_{Z,\eta}$.)
  Moreover, the reduction $\ms A\tensor\kappa(\eta)$ is a central simple
  algebra.  Thus, the closed fiber of the map $\ms A_\eta\tensor\ms
  A_\eta^\circ\to\send(\ms A_\eta)$ of free $\ms O_{Z,\eta}$-modules is an
  isomorphism.  By Nakayama's Lemma, the generic fiber is also an isomorphism,
  which shows that the generic stalk of $\ms A$ is a central simple algebra
  over the function field of $Z$, as desired.  \end{proof}

Second, fixing a Brauer class yields a closed central simple locus, in the
following sense.

\begin{prop} Suppose $X$ is a variety over a field $k$ and $S$ is a $k$-scheme.
  Let $\ms A$ be a relative maximal order on $X\times S$.  Suppose there exists
  a class $\alpha\in\Br(k(X))$ such that for every geometric point
  $s\in\operatorname{gAz}_{\ms A}$ the restriction of $\ms A_s$ to
  $\kappa(s)(X)$ has Brauer class $\alpha$.  Then the central simple locus
  $\operatorname{gAz}_{\ms A}$ is closed in $S$.  \end{prop}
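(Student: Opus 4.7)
The plan is to combine the openness of $\operatorname{gAz}_{\ms A}$ (Proposition \ref{P:csl-open}) with a valuative criterion. Since $\operatorname{gAz}_{\ms A}$ is constructible by Chevalley and open in $S$, it suffices to show it is stable under specialization; by the usual valuative criterion I reduce to the case $S = \Spec R$ with $R$ a discrete valuation ring, generic point $\eta \in \operatorname{gAz}_{\ms A}$, and closed point $s_0$, and I must show $s_0 \in \operatorname{gAz}_{\ms A}$. After base-changing $R$ along a suitable faithfully flat extension I will arrange that $R$ is complete with algebraically closed residue field and that $\eta$ is a geometric point of $S$, so that the hypothesis applies directly and the generic Brauer class of $\ms A_\eta$ is the image of $\alpha$ under $\Br(k(X)) \to \Br(\kappa(\eta)(X))$.

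Next I localize at the generic point $\eta_0 \in Y := X \times S$ of the closed fiber $X_{s_0}$. The stalk $\ms O := \ms O_{Y,\eta_0}$ is a discrete valuation ring whose uniformizer is the uniformizer $\pi$ of $R$, whose fraction field is $K := \kappa(\eta)(X)$, and whose residue field is $\kappa(X)$; the localization $A_0 := \ms A_{\eta_0}$ is a finite flat $\ms O$-algebra whose generic fiber is central simple with Brauer class the image $\alpha_K$ of $\alpha$ under $k(X) \hookrightarrow K$. The crucial observation will be that the valuation $v$ on $K$ determined by $\eta_0$ restricts trivially to $k(X) \subset K$, since $k(X)$ is contained in $\ms O^\times$. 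I will conclude that $\alpha_K$ lies in the image of the composite $\Br(k(X)) \to \Br(\widehat{\ms O}) \to \Br(\widehat{K}_v)$, which is contained in the unramified subgroup, so that $\alpha_K$ is unramified at $v$.

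To finish I will observe that the localization of the relative maximal order $\ms A$ at the codimension-one point $\eta_0$ is a maximal $\ms O$-order, hence hereditary, and then complete: $A_0 \otimes_{\ms O} \widehat{\ms O}$ is a maximal (hereditary) order over the complete discrete valuation ring $\widehat{\ms O}$ in the central simple $\widehat{K}_v$-algebra $A_0 \otimes \widehat{K}_v$. Since $\alpha_K$ is unramified at $v$, this last algebra is a matrix algebra $\M_n(\widehat{K}_v)$; by the Brumer classification invoked in the proof of Proposition \ref{P:main-hered-az} (in the case of type $m=1$), any maximal order in $\M_n(\widehat{K}_v)$ is conjugate to $\M_n(\widehat{\ms O})$, which is Azumaya. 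Faithfully flat descent along $\ms O \to \widehat{\ms O}$ then shows $A_0$ is Azumaya over $\ms O$, so $\ms A$ is Azumaya at $\eta_0$ and $s_0 \in \operatorname{gAz}_{\ms A}$.

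The main obstacle will be the routine-but-fiddly verification that localization of a relative maximal order at the codimension-one point $\eta_0$ yields a maximal order over $\ms O_{Y,\eta_0}$, together with the related check that the hypothesis on geometric points really pins down the generic Brauer class of $\ms A_\eta$ (so that one can identify it with $\alpha_K$ rather than merely with its base change to $\kappa(\bar\eta)(X)$); once these are in place, the rest is a transparent application of ramification theory and Brumer's classification.
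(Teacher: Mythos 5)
Your proposal follows essentially the same route as the paper: reduce via constructibility to $S=\spec R$ a dvr with the generic point $\eta$ in $\operatorname{gAz}_{\ms A}$, localize at the generic point $\eta_0$ of the closed fiber of $X\times S$ to get a dvr $\ms O=\ms O_{X\times S,\eta_0}$ whose valuation $v$ is trivial on $k(X)$, deduce that $\alpha$ extends to $\Br(\ms O)$ so the generic class is unramified at $v$, and conclude that the maximal order $\ms A_{\eta_0}$ is Azumaya. Your completion of $\ms O$ and appeal to Brumer's classification is an unnecessary detour --- the paper simply invokes the standard fact that a maximal order over a dvr in a central simple algebra whose class is restricted from the dvr is Azumaya --- but it is harmless.

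Two soft spots deserve attention. First, the step you dismiss as routine-but-fiddly --- that the localization $\ms A_{\eta_0}$ is a maximal $\ms O$-order --- is in fact the main content of the paper's proof. Given a finite over-order $\iota:\ms A_{\eta_0}\inj B$ inside the generic algebra, the paper realizes $B$ as the stalk at $\eta_0$ of some member of the directed system of coherent $\ms O_{X\times S}$-subalgebras containing $\ms A$, producing a coherent $\ms B$ with $\ms A\inj\ms B$ an isomorphism over a fiberwise dense open; relative maximality of $\ms A$ then forces $\ms A=\ms B$, hence $\iota$ is an isomorphism. Your plan needs this spreading-out argument; it is short but not formal, since maximality is not a priori preserved by localization. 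Second, your proposed arrangement that ``$\eta$ is a geometric point of $S$'' is literally impossible: a field carrying a nontrivial discrete valuation is never algebraically closed, so no base change makes the generic point of a dvr geometric. What you actually need --- and correctly flag at the end --- is that the hypothesis pins down the class of $\ms A_\eta$ over $\kappa(\eta)(X)$ itself as the image of $\alpha$, not merely its base change to $\kappa(\bar\eta)(X)$. This is not a formality: the kernel of $\Br(\kappa(\eta)(X))\to\Br(\overline{\kappa(\eta)}(X))$ contains classes such as the quaternion symbol $(t,a)$ (with $t$ a uniformizer of $R$ and $a\in k(X)^\times$ a nonsquare), which are geometrically trivial yet ramified precisely along the closed fiber, i.e., at $v$; so ``geometrically of class $\alpha$'' alone does not yield unramifiedness at $\eta_0$, and one must use the full hypothesis together with relative maximality here. (To be fair, the paper is itself terse on this point, asserting directly that the class of $F=\ms A_{\eta_0}\tensor K$ is restricted from $\ms O_{X\times S,\eta_0}$.) Modulo these two verifications, your argument coincides with the paper's.
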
 \begin{proof} We
  immediately reduce to the case in which $S$ is Noetherian.  Since
  $\operatorname{gAz}_{\ms A}$ is constructible and compatible with base change
  on $S$, and relative maximal orders are stable under base change, to show
  that $\operatorname{gAz}_{\ms A}$ is closed it suffices to prove it under the
  additional assumption that $S=\spec R$ is the spectrum of a dvr and
  $\operatorname{gAz}_{\ms A}$ contains the generic point.  Let $\eta$ be the
  generic point of the closed fiber of $X\times S$.  Given an inclusion of
  finite algebras $\iota:\ms A_\eta\inj B$, there is an $S$-flat coherent sheaf
  of $\ms O_{X\times S}$-algebras $\ms B$ with an inclusion $\ms A\inj\ms B$
  whose germ over $\eta$ is isomorphic to $\iota$.  Indeed, the subsheaf
  $B\subset\ms A_{K(X)}$ is a colimit of the finite algebras that contain $\ms
  A$, and some member of the directed system will have stalk $B$ at $\eta$.  

  It follows that $\ms A_\eta$ is a maximal order in its fraction ring $F:=\ms
  A_\eta\tensor K(X)$.  But we know that $F$ is a central simple algebra with
  Brauer class restricted from $\ms O_{X\times S,\eta}$, and therefore that any
  maximal order over $\ms O_{X\times S,\eta}$ in $F$ is Azumaya.  It follows
  that $\ms A_\eta$ is Azumaya, and therefore that $\operatorname{gAz}_{\ms A}$
  contains the closed point of $S$, as desired.  \end{proof}

Finally, let us define a relative terminal order of relative global dimension
$2$.  Suppose $S$ is an algebraic space and $Z\to S$ is a proper smooth
relative surface.  Suppose furthermore that $R=R_1+\cdots+R_m$ is a(n $S$-flat)
relative snc divisor on $Z$.

\begin{defn} A Brauer class $\alpha\in\Br(Z\setminus R)$ is \emph{terminal\/}
  if its restriction to every geometric fiber $Z_s$ is terminal in the sense of
  Definition 2.5 of \cite{MR2180454} and for each $i$ the ramification index
  $e_i(s)$ of $\alpha$ along $(R_i)_s$ is independent of $s$.

  A relative maximal order $\ms A$ on $Z$ with Brauer class $\alpha$ will be
  called a \emph{relative terminal order\/}.  \end{defn}

When working over a non-algebraically closed field, the pathology of Remark
\ref{R:base field funny} remains an issue: given a Brauer class
$\alpha\in\Br(k(X))$ that is ramified but such that its base change to
$\widebar k$ is unramified, no maximal order $\ms A$ with class $\alpha$ will
be relatively maximal over $k$ (because it is not geometrically maximal).  The
order $\ms A$ is still relatively normal, however.  Thus, if one endeavors to
study moduli spaces associated to Brauer classes such as $\alpha$, one should
allow certain normal orders.  Of course, one would not like to allow arbitrary
normal orders in a given division algebra, only those orders whose non-Azumaya
locus is related to the ramification locus of $\alpha$ over the base field.  

When the base field is algebraically closed this pathology does not happen, as
one cannot dissolve ramification with a base extension.  We will focus our
attention on this case in the present paper.

\section{Moduli} \label{sec:moduli-1}

\subsection{Notation and assumptions} In this section $X\to S$ will denote a
proper smooth relative surface of finite presentation and $D=D_1+\cdots+D_r$
will be a fixed relative snc divisor in $X$.  This means that each $D_i$ is a
proper smooth relative curve over $S$ and that for any pair $i\neq j$ the
intersection scheme $D_i\cap D_j$ is finite \'etale over $S$.  We also fix a
class $\alpha\in\Br(U)[n]$, where $U=X\setminus D$ and $n$ is invertible on
$S$.  In this section we will try to describe moduli of maximal orders with
Brauer class locally (on $S$) equal to $\alpha$.

\begin{assumption}\label{A:ass} There are integers $e_1,\ldots,e_r>1$ such that
  for each geometric point $s\to S$, the fiber $\alpha|_{U_s}$ is ramified to
  order $e_i$ on $D_i$, and this ramification configuration is terminal in the
  sense of Definition 2.5 of \cite{MR2180454}.  \end{assumption}

Note that the pair $(X,\Delta)$ with $\Delta:=\sum(1-\frac{1}{e_i})D_i$
associated to the ramification datum is Kawamata log terminal.  This appears to
be the genesis of this notation.

A simple example the reader should keep in mind is when $S$ is the spectrum of
an algebraically closed field and $\alpha$ is a Brauer class with snc
ramification divisor $D=D_1+\cdots+D_r$.  Our more general setup gives us the
ability to work with families of such Brauer classes, but a proper theory would
allow singular fibers of $X/S$.

There are two moduli problems that one can associate to the pair
$(X/S,\alpha)$.  



\subsection{Na\"ive families} \label{sec:naive-families} In this section we
write $\bf A$ for the stack of $S$-flat torsion free coherent algebras on $X$.
As described in \cite{MR2233719}, $\bf A$ is an Artin stack locally of finite
presentation over $S$.

\begin{defn}\label{D:naive} The stack of \emph{na\"ive maximal orders\/} is the
  stack $\naive_{X/S}^{\alpha}$ whose objects over an $S$-scheme $T$ are
  relative maximal orders $\ms A$ on $X\times_S T$ such that for every
  geometric point $t\to T$ the Brauer class of $\ms A|_{U\times_T t}$ equals
  $\alpha|_{U\times_S t}$.  \end{defn}

\begin{remark} One might think that in Definition \ref{D:naive} one should
  require that the Brauer class is $\alpha$ \'etale-locally on the base.  As we
  will see in Section \ref{sec:moduli-problems-are}, this does not materially
  improve the situation.  \end{remark}

\begin{lem}\label{L:pregrothex} Let $A$ be a local Noetherian ring over $S$, and let
  $\ms A$ be a flat family of coherent $\ms O_{X}$-algebras over $A$. If the
  closed fiber of $\ms A$ belongs to $\naive_{X/S}^\alpha$ then so does $\ms
  A$.  
\end{lem}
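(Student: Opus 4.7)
The plan is as follows. First, I verify that $\ms A$ is a relative maximal order on $X_T := X \times_S \spec A$; this is immediate from Proposition \ref{P:overlocring}, since the closed-fiber hypothesis supplies in particular that the geometric closed fiber of $\ms A$ is a maximal order, which is exactly the input that proposition needs.

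Second, I must check that for every geometric point $t \to \spec A$ the restriction $\ms A_t|_{U_t}$ is Azumaya with Brauer class $\alpha|_{U_t}$. My plan is to promote this to a global statement: show that $\ms A|_{U_T}$ is Azumaya with Brauer class $\alpha|_{U_T}$ (where $U_T := U \times_S \spec A$), and then restrict to each geometric fiber. For the Azumaya property, the Azumaya locus $V \subseteq X_T$ is open by the cokernel-of-$\mu$ argument in the proof of Proposition \ref{P:csl-open} and contains $U_0$ by the closed-fiber hypothesis. To extend $V$ to cover $U_T$, I identify the non-Azumaya locus $N := X_T \setminus V$ with $D_T := D \times_S \spec A$: the fibers of $N$ over $\spec A$ are the fiberwise ramification divisors (pure of codimension one by the fiberwise maximality of Proposition \ref{P:rmaxope}), $N \cap X_0 = D_0$ by hypothesis, and an $A$-flatness / flat-closure argument using the $S$-flatness of $D$ pins $N$ equal to $D_T$.

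Having this, to check $[\ms A|_{U_T}] = \alpha|_{U_T}$ in $\Br(U_T)[n]$, I reduce by faithfully flat base change to the strict Henselization $A^{sh}$, and then combine the Kummer sequence with proper base change applied to $X \to S$ and to each component $D_i \to S$, together with the purity/excision sequence for the smooth pair $(X_T, D_T)$, to deduce that the restriction $\Br(U_T)[n] \to \Br(U_0)[n]$ is an isomorphism. The two classes agree on $U_0$ by hypothesis, so they agree on $U_T$, and restriction to any geometric point $t \to \spec A$ gives the required condition. The main obstacle is the identification of the non-Azumaya locus with $D_T$, which rules out the creation of extraneous ramification in fibers over non-closed points of $\spec A$ and is where the rigidity of terminal ramification data is essential.
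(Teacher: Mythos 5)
Your steps (1) and (3) track the paper's proof: relative maximality is exactly Proposition \ref{P:overlocring}, and your Kummer-plus-proper-base-change comparison over a Henselian local base is essentially an unrolled form of the paper's argument, which reduces to a complete dvr and packages the same computation by extending the Brauer class to the root stack $X\{D^{1/N}\}$ for $N$ sufficiently large and divisible, then applying proper base change to $X\{D^{1/N}\}\to\spec A$.

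The genuine gap is in your step (2), the identification of the non-Azumaya locus $N$ with $D_T$. An $A$-flat divisor with closed fiber $D_0$ is not unique, so no ``$A$-flatness / flat-closure'' argument can pin $N$ equal to $D_T$: over $A=k[[t]]$, with $D$ locally $\{x=0\}$, the divisor $\{x=ty\}$ is $A$-flat, horizontal, and has the same closed fiber but a different generic fiber, and flatness plus $N\cap X_0=D_0$ cannot distinguish the two. Nor is this a hypothetical pathology for this particular moduli problem: Section \ref{sec:moduli-problems-are} of the paper constructs families in $\naive_{X/S}^{\alpha}(k[\eps])$ --- pushforwards from $X_{k[\eps]}\{\mc D^{1/n}\}$ --- whose ramification data follows a nontrivially deformed divisor $\mc D$ with $\ms O(\mc D-D_{k[\eps]})$ non-torsion, while the closed fiber is unchanged; the entire point of Proposition \ref{not-the-same} is that flat families of maximal orders can move their ramification off $D$ with the closed fiber fixed. \'Etale-locally at a generic point of $D_0$ the same is possible over a dvr (deform the hereditary structure so that it follows $d+tc=0$ rather than $d=0$, $d$ a local equation of $D$ and $t$ the uniformizer), so the ``rigidity of terminal ramification data'' you invoke admits no local or flatness-based proof; it is a global cohomological fact about the proper fibers, and your proposal supplies no mechanism for it.

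Consequently the order of your two steps cannot be maintained: the comparison $\Br(U_T)[n]\to\Br(U_0)[n]$ only sees classes that are already unramified on all of $U_T$, i.e., it presupposes the conclusion of step (2). The two steps must be intertwined, which is what the paper's root-stack formulation accomplishes (the paper is itself terse at this point --- ``the usual characterizations'' --- but its proper-base-change step is what carries the weight). Concretely: after reducing to a complete dvr, let $N'$ be the closure of the ramification divisor of the generic fiber (horizontal, with closed fiber contained in $D_0$ by the topological argument you give correctly), extend the Brauer class of $\ms A$ on its Azumaya locus to a root stack along a sufficiently divisible root of $D_T\cup N'$, and use the constancy of $\H^2(-,\m_N)$ over the Henselian base to conclude that the class is the pullback of $\alpha$; since that pullback ramifies only along $D$, this simultaneously forces $N'\subseteq D_T$ and identifies every fiberwise class with $\alpha_t$. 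With that rearrangement your outline becomes correct; as written, step (2) is a non sequitur at exactly the point where the lemma has its content.
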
 
\begin{proof} By Proposition \ref{P:overlocring} $\ms A$ is a
  relative maximal order, and the usual characterizations show that $\ms A$ is
  Azumaya over $U_A$.  It remains to show that for any geometric fiber of $X$
  over $A$ the Brauer class of that fiber of $\ms A$ is $\alpha$.  It suffices
  to prove this under the assumption that $A$ is a complete discrete valuation
  ring.  Thus, we may assume that $X_A$ is a regular scheme of dimension $3$
  and $\ms A$ is a maximal order which is Azumaya away from a snc divisor
  $D=D_1+\cdots+D_r$ and whose Brauer class has order invertible in $A$.  For
  sufficiently large and divisible $N$, the Brauer class of $\ms A_U$ extends
  to an element of $\beta$ in the Brauer group of the root construction
  $X\{D^{1/N}\}$ (in the notation of \cite{MR2753611}).  By the proper base
  change theorem for the morphism $X\{D^{1/N}\}\to\spec A$, the class $\beta$
  is determined by its closed fiber, so it must equal the pullback of $\alpha$,
  whence the geometric generic fiber of $\ms A_U$ has Brauer class $\alpha$, as
  desired.  \end{proof}

\begin{cor}\label{C:grothex} Let $A$ be a complete local ring with maximal
  ideal $\mf m$.  The functor
  $$\naive_{X/S}^\alpha(A)\to\lim_n\naive_{X/S}^\alpha(A/\mf m^{n+1})$$ is an
  equivalence of categories.  
  \end{cor}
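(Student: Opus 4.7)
The plan is to reduce to the Grothendieck existence theorem for the stack $\bf A$ of $S$-flat torsion-free coherent $\ms O_X$-algebras, and then invoke Lemma \ref{L:pregrothex} to promote the resulting $A$-flat algebra to an object of $\naive_{X/S}^\alpha(A)$. Since $A$ is a complete Noetherian local ring and $X_A\to\spec A$ is proper, formal GAGA applies in the ambient stack of coherent algebras. The whole point of having proved Lemma \ref{L:pregrothex} is precisely to enable this corollary.

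\textbf{Essential surjectivity.} Given a compatible system $(\ms A_n)_{n\ge 0}\in\liml_n\naive_{X/S}^\alpha(A/\mf m^{n+1})$, formal GAGA (EGA III) for the proper morphism $X_A\to\spec A$ effectivizes the underlying sheaves to a coherent, $A$-flat $\ms O_{X_A}$-module $\ms A$ with $\ms A\tensor_A A/\mf m^{n+1}\simeq\ms A_n$ compatibly. The multiplications $\ms A_n\tensor\ms A_n\to\ms A_n$ are a compatible system of morphisms of coherent sheaves on $X_{A/\mf m^{n+1}}$, and so effectivize uniquely to a multiplication $\ms A\tensor\ms A\to\ms A$ making $\ms A$ into an $\ms O_{X_A}$-algebra; associativity and unitality descend from the system by uniqueness of effectivization for morphisms. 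Torsion-freeness on every geometric fiber follows from the closed-fiber hypothesis together with $A$-flatness (since torsion-freeness is inherited along $A/\mf m\hookrightarrow A$ for $A$-flat coherent sheaves; equivalently, the geometric-fiber formulation of the definition is automatic once the closed fiber is handled, by the universal local criterion). Now Lemma \ref{L:pregrothex} applied to $\ms A$, whose closed fiber $\ms A_0$ lies in $\naive_{X/S}^\alpha$ by hypothesis, gives $\ms A\in\naive_{X/S}^\alpha(A)$.

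\textbf{Full faithfulness.} For $\ms A,\ms B\in\naive_{X/S}^\alpha(A)$, the isomorphisms are sections of the functor $\underline{\Isom}_{\text{alg}}(\ms A,\ms B)$, which is a locally closed subspace (cut out by the algebra-compatibility condition and the isomorphism condition) of the affine $A$-scheme $\underline{\Hom}_{\ms O_{X_A}}(\ms A,\ms B)$; the latter is representable because $X_A$ is proper over $A$ and $\ms A,\ms B$ are coherent and $A$-flat. Formal GAGA for morphisms then gives
$$\underline{\Isom}_{\text{alg}}(\ms A,\ms B)(A)=\liml_n\underline{\Isom}_{\text{alg}}(\ms A_n,\ms B_n)(A/\mf m^{n+1}),$$
which is exactly full faithfulness of the functor.

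The only substantive obstacle is ensuring that effectivization passes from modules to algebras and preserves the defining properties of $\naive_{X/S}^\alpha$ (torsion-freeness, relative maximality, and the pointwise Brauer class). The algebra-structure passage is automatic by the uniqueness part of formal GAGA applied to morphisms, torsion-freeness is a geometric-fiber condition inherited from $\ms A_0$ via $A$-flatness, and the remaining conditions are precisely what Lemma \ref{L:pregrothex} supplies.
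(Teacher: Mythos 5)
Your proof is correct and follows essentially the same route as the paper: the paper's proof is a one-line citation of the classical Grothendieck existence theorem together with Lemma \ref{L:pregrothex} (and Proposition \ref{P:overlocring}), which is exactly the structure you spell out -- effectivize the formal system of coherent algebras via formal GAGA, transfer the multiplication and isomorphisms by the uniqueness/full-faithfulness part of the equivalence, and then use Lemma \ref{L:pregrothex} to conclude that the effectivized family lies in $\naive_{X/S}^\alpha$ because its closed fiber does. Your expanded details (flatness of the effectivization, algebra structure, the Isom-functor argument for full faithfulness) are all sound elaborations of what the paper leaves implicit.
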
 
  \begin{proof} This is the classical
  Grothendieck existence theorem combined with Proposition \ref{P:overlocring}
  and Lemma \ref{L:pregrothex}, which says that the effectivization of any
  formal family lying in $\naive_{X/S}^\alpha$ also lies in
  $\alpha_{X/S}^\alpha$.  
  \end{proof}

\begin{lem}\label{L:hered-open}
Suppose $\ms A$ is a flat family of coherent $\ms O_X$-algebras over a Noetherian base scheme $T$ that is of finite type over an excellent Dedekind domain or a field. There is an open subscheme $U\subset T$ such that for any geometric point $t\to T$, the geometric fiber $\ms A_t$ is in $\naive_{X/S}^\alpha$ if and only if $t$ factors through $U$.
\end{lem}
\begin{proof}
By Theorem 0.5 of \cite{MR1728399}, it suffices to prove the result after replacing $T$ by a Dedekind scheme, and now we wish to show that the geometric generic fiber of $\ms A$ is in $\naive_{X/S}^\alpha$ if and only if all but finitely many geometric fibers lie in $\naive_{X/S}^\alpha$. By Lemma \ref{L:pregrothex}, if any closed geometric fiber is in $\naive_{X/S}^\alpha$ then the geometric generic fiber is in $\naive_{X/S}^\alpha$. It thus suffices to show that if the geometric generic fiber is in $\naive_{X/S}^\alpha$ then all but finitely many geometric closed fibers are in $\naive_{X/S}^\alpha$.

By Proposition \ref{P:push forward get terminal}, the generic fiber $\ms A_\eta$ is the pushforward of a blt Azumaya algebra $\mf A_\eta$ on $\ms X_\eta$ along the morphism $\pi_\eta:\ms X_\eta\to X_\eta$. By spreading out, we may assume after removing finitely many points from $T$ that $\mf A$ extends to all of $\ms X$. Moreover, the isomorphism $\ms A_\eta\simto\pi_\ast\mf A_\eta$ extends to an isomorphism over some dense open $U\subset X$ that contains the generic fiber. The complement of $U$ will have finite image in $T$, whereupon we have identifed the remaining fibers with pushfowards of blt Azumaya algebras with Brauer class $\alpha$, rendering them elements of $\naive_{X/S}^\alpha$, as desired.
\end{proof}

\begin{prop} The stack $\naive_{X/S}^\alpha$ is an Artin stack locally of
  finite presentation over $S$, and the morphism $\naive_{X/S}^\alpha\to\bf A$
  is an open immersion.  
\end{prop}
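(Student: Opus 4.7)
Since $\bf A$ is an Artin stack locally of finite presentation over $S$ by \cite{MR2233719}, the algebraicity and finite presentation of $\naive_{X/S}^\alpha$ follow once we show $\naive_{X/S}^\alpha \to \bf A$ is an open immersion. By the definition of open immersion for stacks and absolute Noetherian approximation, it suffices to prove the following: for any Noetherian $T$ and any family $\ms A$ over $X_T$ classifying a morphism $T\to\bf A$, the locus $V\subset T$ of points $t$ where $\ms A_t$ is a maximal order on $X_t$ with $[\ms A_t|_{U_t}]=\alpha|_{U_t}$ is open.

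I would first show that the maximality locus $V_1 \subset T$ is open. Given $t\in V_1$, the geometric closed fiber of $\ms A$ over the strict Henselization $\Spec\ms O_{T,t}^{\text{sh}}$ is maximal (via Lemma~\ref{L:algclbase} to pass between $\ms A_t$ and its base change to $\widebar{\kappa(t)}$), so Proposition~\ref{P:overlocring} implies that the pullback of $\ms A$ to $\Spec\ms O_{T,t}^{\text{sh}}$ is a relative maximal order. Because $\ms A$ is of finite presentation, a standard limit argument produces an étale neighborhood $U\to T$ of $t$ on which $\ms A|_U$ is a relative maximal order; by Proposition~\ref{P:rmaxope} every geometric fiber of $\ms A|_U$ is then maximal. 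The image of $U$ in $T$ is an open neighborhood of $t$ contained in $V_1$, so $V_1$ is open.

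On $V_1$ the algebra $\ms A|_{V_1}$ is a relative maximal order, so the Azumaya locus $V_2\subset V_1$ of points where $\ms A|_{U_{V_1}}$ is Azumaya is open by Proposition~\ref{P:csl-open}. Over $V_2$ the Azumaya algebra $\ms A|_{U_{V_2}}$ determines a Brauer class $\beta\in\Br(U_{V_2})$, and to compare $\beta$ with $\alpha$ fiberwise I would, exactly as in the proof of Lemma~\ref{L:pregrothex}, pass to a sufficiently divisible root construction $X\{D^{1/N}\}\to X$; the class $\beta$ extends to the Brauer group of the proper stack $X\{D^{1/N}\}_{V_2}$, as does $\alpha$, and proper base change shows that the locus $\{\beta=\alpha\}\subset V_2$ is both open and closed. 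Intersecting with $V_2$ yields the desired open $V\subset T$.

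The main obstacle is the Brauer class comparison: the classes $\beta$ and $\alpha$ a priori live only on the non-proper open $U$, so fiberwise agreement need not behave well in families. Passing to the proper root stack $X\{D^{1/N}\}$, on which the ramified class $\alpha$ becomes unramified and both classes are defined globally, is what allows us to bring the proper base change theorem to bear and is the essential non-formal ingredient; the maximality argument, by contrast, is purely a consequence of the earlier Propositions~\ref{P:overlocring} and~\ref{P:rmaxope}.
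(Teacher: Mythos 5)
Your overall architecture (reduce to a Noetherian base, then show the locus of fibers that are maximal with class $\alpha$ is open, treating maximality, Azumaya-ness on $U$, and the Brauer class in turn) is reasonable, and your last step --- comparing $\beta$ with $\alpha$ by passing to the proper root stack $X\{D^{1/N}\}$ and invoking proper base change --- is essentially the paper's argument (and that of Lemma \ref{L:pregrothex}). But there is a genuine gap at the heart of your maximality step: the ``standard limit argument'' that spreads relative maximality from $\spec\ms O_{T,t}^{\text{\rm sh}}$ to an \'etale neighborhood of $t$. Limit arguments of this kind apply only to conditions that are themselves finitely presented (vanishing of a finitely presented module, a map being an isomorphism, local freeness, and so on). ``Relative maximal order'' is not such a condition: it is a universally quantified statement over all base changes and all injections into torsion free algebras, equivalent by Proposition \ref{P:rmaxope} to maximality of \emph{every} geometric fiber --- and relative maximality over $\spec\ms O_{T,t}^{\text{\rm sh}}$ only controls the fibers at generizations of $t$, saying nothing about the other points of an \'etale neighborhood. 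This is precisely the subtlety the paper flags at the start of its proof (``it is by no means obvious that any such approximation is itself a relative maximal order''). What Proposition \ref{P:overlocring} actually buys you is that your locus $V_1$ is stable under generization; for openness on a Noetherian base you still need constructibility, and your argument supplies none. The paper obtains it from Assumption \ref{A:ass}: once the fibers are known to be Azumaya on $U$ with class $\alpha$ (so the fiberwise ramification is the fixed terminal configuration), maximality of a fiber becomes equivalent to local freeness, hereditariness, and left-invertibility of the dualizing bimodule $\omega$ on $X^\circ=X\setminus D^{\text{\rm sing}}$ --- conditions that are constructible (Chevalley) and generization-stable (Nakayama), and that \emph{do} descend through limits (an inverse of $\omega$ comes from some $T_i$), which is also how the paper proves local finite presentation. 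Note the forced order of operations: the class comparison must precede the maximality step, since the characterization of maximality presupposes the fiberwise class is $\alpha$. A clean symptom that your argument cannot work as written is that it nowhere uses Assumption \ref{A:ass} for the maximality locus, while the paper's closing Remark asserts that without this assumption the openness of the na\"ive locus is ``undoubtedly false''; your concluding claim that the maximality part is ``purely a consequence'' of Propositions \ref{P:overlocring} and \ref{P:rmaxope} would prove that false statement.

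A secondary, repairable slip: Proposition \ref{P:csl-open} concerns the \emph{central simple locus} --- the set of fibers whose generic stalk is central simple --- not the locus of $t$ where $\ms A_t$ is Azumaya on all of $U_t$, which is what you need. The paper handles the stronger condition by applying Chevalley to the constructible set $U_T\setminus V$ (with $V\subset X_T$ the open Azumaya locus) and then using rigidity of matrix algebras for stability under generization. One could instead deduce Azumaya-ness on $U_t$ from maximality together with the equality of classes (a maximal order whose generic class is $\alpha|_{U_t}$ is Azumaya on $U_t$), but that again requires the class comparison to come first, reinforcing the reordering needed to close the main gap.
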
 
\begin{proof} 
It suffices to prove the latter statement by \href{http://stacks.math.columbia.edu/tag/01TQ}{Tag 01TQ} of \cite{stacks-project}. Since belonging to $\naive_{X/S}^\alpha$ is a fiberwise statement, this follows immediately from Lemma \ref{L:hered-open}.
\end{proof}

We arrive at the somewhat surprising conclusion that maximal orders with Brauer
class $\alpha$ form an open substack of the stack of all coherent algebras.
However, the deformation theory is ``arbitrarily bad'' in the sense that it is
identical to the deformation theory of maximal orders.  We will describe a
refinement of the moduli problem with the same closed points but different
infinitesimal properties that has a natural compactification admitting a
virtual fundamental class.

\begin{remark} Without the Assumption \ref{A:ass}, the openness of the locus of
  na\"ive families is undoubtedly false.  \end{remark}

\subsection{Blt Azumaya families} \label{sec:azumaya-families}

Write $\pi:\widetilde X\to X$ for the stack $X\langle D^{1/n}\rangle$ in the
notation of Section 3.B of \cite{MR2753611}; the stack $\widetilde X$ is a
product of root constructions on each $D_i$ and is a smooth proper
Deligne-Mumford relative surface over $S$.

\begin{defn} The stack of \emph{blt Azumaya algebras\/} is the stack
  $\azumaya_{\widetilde X/S}^\alpha$ whose objects over $T$ are Azumaya
  algebras $\ms A$ on $\widetilde X_T$ such that for every geometric point
  $t\to T$ the fiber $\ms A_t$ is a blt Azumaya algebra with Brauer class
  $\alpha_t$.

\end{defn}

\begin{prop} The stack $\azumaya_{\widetilde X/S}^\alpha$ is an Artin stack
  locally of finite presentation over $S$.  \end{prop}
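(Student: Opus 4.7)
The plan is to realize $\azumaya_{\widetilde X/S}^\alpha$ as an open substack of an algebraic stack of coherent algebras on $\widetilde X/S$. First, the stack $\mathbf{A}_{\widetilde X/S}$ of $S$-flat torsion-free coherent $\ms O_{\widetilde X}$-algebras is algebraic and locally of finite presentation over $S$: the argument in \cite{MR2233719} extends from schemes to tame proper smooth Deligne-Mumford stacks with finite diagonal without essential change (the underlying stack of coherent sheaves remains algebraic, and imposing an associative unital algebra structure cuts out a closed substack in a $\Hom$-scheme fibered over it). The Azumaya locus is open by the same argument as the earlier lemma: surjectivity of the multiplication map $\mu:\ms A\tensor\ms A^\circ\to\send(\ms A)$ is an open condition, and combined with local freeness of the expected rank $n^2$, this produces an open substack of degree-$n$ Azumaya algebras.

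It then suffices to show that the two remaining fiberwise conditions --- (a) the Brauer class of $\ms A_t$ equals $\alpha_t$, and (b) $\ms A_t$ is blt --- each carve out an open substack. For (a), the proper and smooth base change theorems applied to the tame proper smooth DM stack $\widetilde X\to S$ (as used in the proof of Lemma \ref{L:pregrothex}) imply that the Brauer class of a fiber of an Azumaya algebra is locally constant on $S$; hence the matching locus is clopen. For (b), the blt condition is tested at the generic point $\eta_i$ of each component of $\pi^{-1}(D_i)$, where it asserts that the associated $\m_{e_i}$-representation is regular. Since $n$ is invertible on $S$, the group $\m_{e_i}$ is \'etale, so the isotypic decomposition of a flat family of $\m_{e_i}$-representations is locally constant on the base; this yields openness of (b).

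The step I expect to be the main obstacle is making (b) precise in families, since the generic point $\eta_i$ moves with the base and is not a point of $\widetilde X$ itself. I would handle this by invoking Proposition \ref{P:main-hered-az} \'etale-locally along each $D_i$: passing to the strict Henselization of $\ms O_{X,\eta_i}$, blt Azumaya algebras over $\widetilde X$ correspond bijectively to hereditary orders of type $e_i$ over the resulting DVR, where being of fixed type is manifestly a locally constant invariant in flat families (the multiplicity sequence of Brumer is combinatorial data preserved under flat specialization). Patching these local open conditions over $S$ then yields the openness of the blt locus in the Azumaya stack, completing the proof.
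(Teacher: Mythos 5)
Your proposal is correct and follows essentially the same route as the paper's (very terse) proof: the paper cites the main result of \cite{MR2753611} for algebraicity of the stack of $S$-flat coherent algebras on $\widetilde X$, observes that the Azumaya locus and the locus where the type along each $D_i$ is $e_i$ are open, and uses the proper and smooth base change theorems to see that the fixed-Brauer-class locus is clopen---exactly your decomposition, with your extension of \cite{MR2233719} to tame DM stacks playing the role of the citation of \cite{MR2753611}. Your elaboration of the type condition via flatness of the isotypic pieces (using that $\m_{e_i}$ is linearly reductive since $n$ is invertible) correctly supplies the openness the paper asserts without proof; only the parenthetical claim that Brumer's multiplicity sequence is ``manifestly'' preserved under flat specialization is glib as stated, but your representation-theoretic argument renders it unnecessary.
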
 \begin{proof} By the
  main result of \cite{MR2753611}, we know that the stack of all $S$-flat
  coherent algebras on $\widetilde X$ is an Artin stack locally of finite
  presentation on $S$.  The locus of Azumaya algebras is open, as is the locus
  where the type at each $x_i$ is $e_i$.  Finally, the proper and smooth base
  change theorem in \'etale cohomology shows that the locus on which the fibers
  have Brauer class $\alpha$ is clopen.  \end{proof}

\section{Relations among the moduli problems} \label{sec:comparison-result}

\subsection{Pushforwards of Azumaya families are na\"ive families}
\label{sec:pushf-azum-famil}

Let $\ms A$ be a family in $\azumaya_{\widetilde X/S}^\alpha$ over a base $T$.
The pushforward morphism $\pi:\widetilde X\to X$ yields a sheaf of algebras
$A:=\pi_\ast\ms A$.

\begin{prop}\label{P:can push forward} The algebra $A$ described above is a
  family in $\naive_{X/S}^\alpha$.  \end{prop}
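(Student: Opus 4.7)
The plan is to reduce everything to the fiberwise statement already proven in Proposition \ref{P:push forward get terminal}, via a base-change argument for $\pi_\ast$.

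The first step is to show that $\pi_\ast$ commutes with base change on $T$ when applied to any blt Azumaya family, and in particular that $A = \pi_\ast \ms A$ is $T$-flat, coherent, and torsion free. For this I would work étale locally on $X$ around the ramification divisor $D$, using the local description from Section \ref{sec:hered-orders-over}: in a neighborhood of a point of $D_i$, the map $\pi$ is pulled back from the root construction, and on such a neighborhood $\ms A$ is (fppf-locally) isomorphic to $\send(\ms V)$ for a locally free sheaf $\ms V$ on $\widetilde X_T$ whose associated representation is the regular representation of $\m_{e_i}$. By the proof of Proposition \ref{P:main-hered-az}, $\pi_\ast \send(\ms V)$ is then a locally free $\ms O_{X_T}$-module (of rank $n^2$) whose formation commutes with arbitrary base change on $T$ because $R^i\pi_\ast$ of the regular representation vanishes for $i>0$ and the rank of $\pi_\ast$ is constant. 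Away from $D$, $\pi$ is an isomorphism and there is nothing to check. Thus $A$ is a locally free sheaf of $\ms O_{X_T}$-algebras (in particular $T$-flat, coherent, and torsion free) whose formation commutes with base change on $T$.

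The second step is the fiberwise verification. For each geometric point $t\to T$, the compatibility of $\pi_\ast$ with base change gives $A_t \cong \pi_{t,\ast} \ms A_t$. Since $\ms A_t$ is a blt Azumaya algebra on $\widetilde X_t$ with Brauer class $\alpha_t$, Proposition \ref{P:push forward get terminal} shows that $A_t$ is a terminal (in particular maximal) order on $X_t$ with Brauer class $\alpha_t$. Since $\pi$ is an isomorphism over $U$, the restriction $A_t|_{U_t}$ coincides with $\ms A_t|_{U_t}$, which by hypothesis has Brauer class $\alpha|_{U\times_S t}$. This gives condition (d) of Definition \ref{D:naive}.

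The final step is to invoke Proposition \ref{P:rmaxope}: since $X\to S$ has integral geometric fibers, $A$ is $T$-flat, torsion free coherent, and has maximal geometric fibers, so $A$ is a relative maximal order on $X_T$. Combined with the fiberwise Brauer class identification, this places $A$ in $\naive_{X/S}^\alpha(T)$, as required.

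The main technical obstacle is the base-change statement for $\pi_\ast$ in step one; once that is in hand, steps two and three are immediate applications of earlier results. The base change could alternatively be justified by noting that the question is étale local, that blt Azumaya algebras are, fppf locally, endomorphism algebras of vector bundles with regular-representation type, and that the pushforward of such a bundle is computed by taking $\m_{e_i}$-invariants, an operation that is exact and compatible with base change since $e_i$ is invertible on $S$.
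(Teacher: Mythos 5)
Your proof is correct and takes essentially the same route as the paper's: the paper dispatches your entire first step in one line by citing tameness of $\widetilde X$ (which gives exactness of $\pi_\ast$ and hence $T$-flatness, coherence, and base-change compatibility --- precisely your $\m_{e_i}$-invariants argument), then reduces to a geometric point and applies Proposition \ref{P:push forward get terminal} together with Assumption \ref{A:ass} to conclude that the terminal pushforward is maximal. Two small nits: Definition \ref{D:naive} has no ``condition (d)'', and your parenthetical ``terminal (in particular maximal)'' should explicitly invoke Assumption \ref{A:ass}, since that implication is not automatic and is exactly where the paper appeals to it.
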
 \begin{proof} First, since
  $\widetilde X$ is tame and $\ms A$ is $T$-flat and coherent, we know that $A$
  is also $T$-flat and coherent, and that the formation of $A$ is compatible
  with base change on $T$.  Thus, to show that $A$ is a family in
  $\naive_{X/S}^\alpha$, it suffices to assume that $T$ is the spectrum of an
  algebraically closed field $K$.  Since $\widetilde X\to X$ is an isomorphism
  over a dense open subset, we know that $A$ is generically Azumaya with Brauer
  class $\alpha$.  By Proposition \ref{P:push forward get terminal} we have
  that $A$ is terminal, and Assumption \ref{A:ass} implies that any terminal
  order is maximal, completing the proof.  \end{proof}

Pushforward along $\pi$ thus defines a $1$-morphism of stacks

$$\Phi:\azumaya_{\widetilde X/S}^\alpha\to\naive_{X/S}^\alpha.$$ This morphism
will be the object of study for the rest of this section.  In particular, we
will show that it is a proper bijection that is not in general surjective on
tangent spaces.  This thus realizes $\azumaya_{\widetilde X/S}^\alpha$ as
something between $\naive_{X/S}^\alpha$ and its normalization.  We are not sure
what normality properties $\azumaya_{\widetilde X/S}^\alpha$ enjoys, but it is
likely that it can be arbitrarily bad (although one might hope for
stabilization as one varies discrete parameters like the second Chern class).

\subsection{Na\"\i ve families over complete dvrs and reflexive blt Azumaya
algebras} \label{sec:local-struct-kolla}

Let $R$ be a complete dvr over $S$ with uniformizer $t$ and algebraically
closed residue field $k$ and let $A\in\naive_{X/S}^\alpha(R)$.  In this section
we will show that locally on $X_R$ the family $A$ comes from a reflexive
Azumaya algebra over a stack with $A_{n-1}$-singularities and coarse moduli
space $X_R$.  We will use this in Section \ref{sec:properness-phi} to show that
$\Phi$ satisfies the valuative criterion of properness.

Write $\widebar X=X[D^{1/n}]$, in the notation of Section 3.B of
\cite{MR2753611}.  This is a stack with coarse moduli space $X$ that may be
locally described as follows: at a crossing section of two components $D_1$ and
$D_2$ of $X$ with local equations $t_1=0$ and $t_2=0$, the stack $\widebar X$
is given by taking the stack-theoretic quotient for the action of $\m_n$ on
$\ms O[w_1,w_2]/(w_1^n-t_1,w_2^n-t_2)$ given by $\zeta\cdot(w_1,w_2)=(\zeta
w_1,\zeta^{-1}w_2)$.  Since $D$ has relative normal crossings, we see that
$\widebar X$ has flat families of $A_{n-1}$-singularities in fibers.

As in Section \ref{sec:globalization}, we have a smooth stack $\widetilde X$
dominating $\widebar X$.

We will prove the following local structure theorem in this section, and then
study reflexive Azumaya algebras on $\widebar X$ in the following section.

\begin{prop}\label{P:local structure} Let $\spec R\to S$ be a dvr over $S$.
  Any algebra in $\naive_{X/S}^\alpha(R)$ is the pushforward from $\widebar X$
  of a unique reflexive blt Azumaya algebra on $\widetilde X_R$ with Brauer
  class $\alpha$.  \end{prop}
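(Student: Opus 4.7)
The plan is to construct $\widetilde{\ms A}$ as a reflexive extension of the blt Azumaya algebra provided by Proposition~\ref{P:push forward get terminal} over the smooth locus of the ramification divisor, and then to verify that this extension remains Azumaya across the crossings. Both existence and uniqueness will rest on the codimension-$2$ extension principle for reflexive sheaves on the smooth Deligne-Mumford stack $\widetilde X_R$.

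For uniqueness, if $\widetilde{\ms A}_1, \widetilde{\ms A}_2$ are two reflexive blt Azumaya algebras on $\widetilde X_R$ with common pushforward $A$, then Proposition~\ref{P:push forward get terminal} applied over $\spec R$ gives a canonical isomorphism of their restrictions to $\widetilde X_R^\circ := \widetilde X_R \setminus \pi^{-1}(\Sing(D_R))$. Since $\Sing(D_R)$ has codimension $2$ in $X_R$, its preimage in $\widetilde X_R$ has codimension $2$ in a smooth ambient, and the isomorphism extends uniquely to all of $\widetilde X_R$.

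For existence I would apply Proposition~\ref{P:push forward get terminal} over $\spec R$ to obtain a blt Azumaya algebra $\ms A^\circ$ on $\widetilde X_R^\circ$ with $\pi_\ast\ms A^\circ = A|_{X_R^\circ}$, and set $\widetilde{\ms A} := j_\ast\ms A^\circ$ via the open immersion $j: \widetilde X_R^\circ \hookrightarrow \widetilde X_R$. Coherence and reflexivity follow from the codimension-$2$ principle, and the algebra structure is inherited from $\ms A^\circ$. The compatibility $\pi_\ast\widetilde{\ms A} = A$ then follows on $X_R$ since both sides are reflexive and agree off $\Sing(D_R)$, and the blt property holds as it is a codimension-$1$ condition verified by $\ms A^\circ$.

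The main obstacle will be verifying that $\widetilde{\ms A}$ is genuinely Azumaya near a crossing of components $D_i, D_j$. My plan is to reduce to the closed fibre $\widetilde X\otimes\kappa$: Proposition~\ref{P:push forward get terminal} applied directly to the smooth surface $X\otimes\kappa$ yields a unique blt Azumaya algebra $\ms A_\kappa$ whose pushforward is $A\otimes\kappa$. A local computation in the formal model $[\spec R[[u_1,u_2,z]]/(\m_{e_i}\times\m_{e_j})]$, exploiting the Brumer-type structure of $\widetilde{\ms A}$ along each axis, would show that $\widetilde{\ms A}\otimes\kappa$ remains reflexive and hence, by the uniqueness part of Proposition~\ref{P:push forward get terminal}, agrees with $\ms A_\kappa$. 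Once $\widetilde{\ms A}\otimes\kappa$ is Azumaya, openness of the Azumaya locus together with flatness over $R$ propagates this to a Zariski-open neighborhood in $\widetilde X_R$, finishing the argument.
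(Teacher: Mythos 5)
Your proposal aims at a stronger statement than the proposition actually makes, and in doing so it assumes the hardest point in the paper. As the paper's proof and its citation in Section~\ref{sec:properness-phi} make clear, the proposition only asks for a \emph{reflexive} blt Azumaya algebra --- an algebra on the singular stack $\widebar X_R$ (the ``$\widetilde X_R$'' in the statement is a slip), reflexive as a sheaf and Azumaya of the correct type in codimension one; no claim at all is made at or near the crossing locus. Accordingly, the paper never touches the crossings here: it applies Proposition~\ref{P:push forward get terminal} only to the generic fiber $X_\eta$ (a smooth surface over the fraction field, the only setting in which that proposition is available), pushes the resulting Azumaya algebra forward to $\widebar X_\eta$, takes \emph{any} reflexive extension over $\widebar X_R$ agreeing with the localization of $A$ at the generic point of the closed fiber, and then identifies the pushforward with $A$ using the rigidity of maximal orders on integral $S_2$ stacks determined at codimension-$1$ points (Lemma~\ref{L:local determination}); uniqueness is the same codimension-$1$ rigidity. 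Your uniqueness paragraph is fine in spirit --- isomorphisms of reflexive sheaves do extend across codimension $2$ --- but your repeated invocation of Proposition~\ref{P:push forward get terminal} ``over $\spec R$'' is not licensed: that result is proved for surfaces over a field via codimension-$1$ localizations, and on the threefold $X_R^\circ$ the points of $D$ of codimension $\geq 2$ in the total space (e.g.\ the generic points of $D_i\otimes\kappa$) are controlled by nothing in the paper. Producing an honest Azumaya algebra $\ms A^\circ$ on all of $\widetilde X_R^\circ$ with $\pi_\ast\ms A^\circ=A|_{X_R^\circ}$ would require a relative, family version of Proposition~\ref{P:main-hered-az} over two-dimensional local rings, which is never established --- and is unnecessary for the statement.

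The genuine gap is your crossing-point step: you assert that ``a local computation in the formal model'' shows $(j_\ast\ms A^\circ)\otimes\kappa$ remains reflexive. Restriction to the closed fiber of a reflexive extension (a pushforward from the complement of a codimension-$2$ locus) does \emph{not} stay reflexive in general; the failure of reflexive hulls to commute with base change is precisely the Koll\'ar-type condition the whole paper is organized around. This is why the Proposition of Section~\ref{sec:refl-azum-algebr} takes the reflexivity of $\ms A\tensor k$ as a \emph{hypothesis} and needs the McKay correspondence together with idempotent lifting over Henselian rings to conclude, and why Section~\ref{sec:properness-phi} invokes Koll\'ar's theory of hulls. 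In short, the step you defer to a computation is the crux of the later properness theorem, not of this proposition; assuming it here both overshoots what is asked and begs the central question. A repaired proof along your lines would drop the Azumaya-at-crossings goal entirely: build the codimension-$1$ datum from the generic fiber via Propositions~\ref{P:push forward get terminal} and~\ref{P:main-hered-az}, extend reflexively over $\widebar X_R$, and conclude with the codimension-$1$ uniqueness lemma --- which is exactly the paper's argument.
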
 \begin{proof} Let $A\in\naive_{X/S}^\alpha(R)$.
  By Proposition \ref{P:push forward get terminal}, the generic fiber $A_\eta$
  is the pushforward of an Azumaya algebra $\ms A_\eta$ on $\widetilde X_\eta$.
  Since $\widetilde X\to\widebar X$ is relatively tame, we see that the
  pushforward of $\ms A_\eta$ to $\widebar X$ is a reflexive blt Azumaya
  algebra $\widebar A_\eta$ that pushes forward to $A_\eta$.

  The morphisms $\widetilde X_R\to\widebar X_R\to X_R$ are isomorphisms over
  the generic point of the closed fiber of $X_R$.  Moreover, the order $A$ is
  Azumaya in a neighborhood of that point, and all of the orders and Azumaya
  algebras described so far are contained in the localization $B$ of $A$ at
  this point.  

  \begin{lem}\label{L:local determination} Let $Z$ be an integral $S_2$
    Noetherian Deligne-Mumford stack and $A$ a finite-dimensional
    $\kappa(Z)$-algebra.  Suppose for each codimension $1$ point $z$ there is
    given a maximal order $B_z\subset A$ over the local ring $\ms O_{Z,z}$.
    Then there is at most one maximal order $B$ over $Z$ such that $B\tensor\ms
    O_{Z,z}=B_z\subset A$.  \end{lem} \begin{proof} Given two such maximal
    orders $B$ and $B'$, consider the algebra $B'':=B\cap B'$.  Since $B$ and
    $B'$ are $S_2$, we have that $B''$ is also $S_2$.  Since $B''$ is $S_2$ and
    maximal in codimension $1$ it is maximal. By hypothesis, the inclusions
    $B''\subset B$ and $B''\subset B'$ are isomorphisms are all codimension $1$
    points.  Thus, $B''\to B$ and $B''\to B'$ are isomorphisms, as desired.
  \end{proof}

  Now let $\widebar A$ be any reflexive extension of $\widebar A_\eta$ that
  localizes to $B$.  We see that the pushforward of $\widebar A$ is a maximal
  order agreeing with $A$ in the generic fiber and at the generic point of the
  closed fiber, and thus at all codimension $1$ points.  Applying Lemma
  \ref{L:local determination}, we conclude that $\widebar A$ pushes forward to
  $A$, as desired.  \end{proof}

\subsection{Local structure of reflexive Azumaya algebras on families of
rational double points} \label{sec:refl-azum-algebr}

In this section we will analyze the local structure of reflexive Azumaya
algebras on $\widebar X$.

Let $R$ be a complete dvr with uniformizer $t$ and algebraically closed residue
field $k$ of characteristic $0$.  Let $Z:=\spec B\to\spec R$ be a smooth
relative affine surface and $D_1,D_2\subset Z$ smooth relative curves whose
intersection $S:=D_1\cap D_2$ is isomorphic to the scheme-theoretic image of a
section of $Z/R$.  Replacing $Z$ with an open subscheme containing $S$ if
necessary, we may assume that $D_i$ is the vanishing locus of a global function
$t_i\in\Gamma(Z,\ms O_Z)$, $i=1,2$.  Let $Z'=\spec B[w]/(w^n-t_1t_2)$ be the
cyclic cover branched along $D_1\cup D_2$; there is a section $\sigma:R\simto
S'\subset Z'$ lifting $S$.  There is a stack $\ms Z$ with coarse moduli space
$Z'$ given by taking the quotient of $\spec B[w_1,w_2]/(w_1^n-t_1,w_2^n-t_2)$
by the action of $\m_n$ in which $\zeta\cdot(w_1,w_2)=(\zeta
w_1,\zeta^{-1}w_2)$.  The natural map $\ms Z\to Z'$ is an isomorphism away from
the singular locus $S'$.

Write $z\in Z'$ for the closed point of $S'$, and let $Y'=\spec \ms
O_{Z',z}^{\text{\rm hs}}$ and $\ms Y'=Y\times_{Z'}\ms Z$ be the Henselizations
of $Z'$ and $\ms Z$ at $z$.  Because $R$ is strictly Henselian, there is a
section $T\subset Y\to\spec R$ lying over $S'$.  Finally, let $Y$ be the
Henselization of $Y'$ along $T$ and let $\ms Y=\ms Y'\times_{Y'}Y$, with
$\pi:\ms Y\to Y$ the natural map.  We have that $(\ms Y\times_YT)_{\text{\rm
red}}$ is isomorphic to $\B\m_{n,T}$.  Write $U=Y\setminus T$; this is in fact
the regular locus of $Y$, and it has regular geometric fibers over $R$.  Note
that, as a limit of Henselian local schemes, $Y$ is itself still a Henselian
local scheme.

\begin{lem}\label{L:brtriv} The Brauer group $\Br(U)$ is trivial.  \end{lem}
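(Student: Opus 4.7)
The plan is to cover $U$ by an \'etale $\m_n$-torsor $\tilde U$ whose \'etale cohomology I can compute, then descend via Hochschild--Serre. Using the root-stack presentation $\ms Y = [V/\m_n]$ with $V$ the Henselization along its fixed locus $V_0 := \{w_1 = w_2 = 0\}$ of $\spec \ms O_Z[w_1, w_2]/(w_1^n - t_1, w_2^n - t_2)$, the scheme $V$ is regular, $V_0 \cong T \cong \spec R$ is regular of codimension $2$, and $\m_n$ acts freely on the complement. Thus $\tilde U := V \setminus V_0 \to U$ is a connected \'etale $\m_n$-torsor.

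Since $V$ is Henselian along $V_0$, Gabber's rigidity theorem for Henselian pairs gives $\H^*_{\et}(V, \m_m) \simto \H^*_{\et}(V_0, \m_m)$, which vanishes in positive degrees because $V_0 = \spec R$ is strictly Henselian with algebraically closed residue field. Absolute cohomological purity for the regular codimension-$2$ pair $(V, V_0)$ then yields $\H^1(\tilde U, \m_m) = \H^2(\tilde U, \m_m) = 0$ for every $m \geq 1$. Via the Kummer sequence and the torsion-ness of the Brauer group, this forces $\Pic(\tilde U)$ to be a $\Q$-vector space and $\Br(\tilde U) = 0$.

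The Hochschild--Serre spectral sequence
$$E_2^{p,q} = \H^p(\m_n, \H^q(\tilde U, \G_m)) \Rightarrow \H^{p+q}(U, \G_m)$$
then has $E_2^{p,1} = 0$ for $p > 0$ (since $n = |\m_n|$ kills $\H^p(\m_n, -)$ but acts invertibly on the uniquely divisible $\Pic(\tilde U)$) and $E_2^{p,2} = 0$, so $\Br(U) = E_2^{2,0} = \H^2(\m_n, \Gamma(\tilde U, \G_m))$.

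Because $V$ is $S_2$ and $V_0$ has codimension $2$, Hartogs gives $\Gamma(\tilde U, \G_m) = \Gamma(V, \G_m)$, and the $\m_n$-equivariant reduction $1 \to 1 + \mf m_{V_0} \to \Gamma(V, \G_m) \to R^\times \to 1$ splits the computation. In characteristic $0$ the logarithm identifies $1 + \mf m_{V_0}$ and $1 + \mf m_R$ with uniquely $n$-divisible abelian groups, whose higher $\m_n$-cohomology vanishes; and $\m_n$ acts trivially on $R^\times = k^\times \cdot (1 + \mf m_R)$, where divisibility of $k^\times$ gives $\H^2(\m_n, k^\times) = k^\times/(k^\times)^n = 0$. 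Assembling these yields $\H^2(\m_n, \Gamma(V, \G_m)) = 0$, hence $\Br(U) = 0$. The main delicate point is this last step: $\Gamma(V, \G_m)$ is not $n$-torsion-free (it contains $\m_n$), so vanishing relies on $n$-divisibility of the factor $k^\times$ rather than torsion-freeness.
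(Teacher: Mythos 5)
Your proposal is correct, and it takes a genuinely different route from the paper's. The paper never passes to a finite cover: it uses purity to identify $\Br(U)$ with $\Br(\ms Y)$, then rigidity along the Henselian pair to reduce to the reduced closed substack $\B\m_{n,T}$, and finally the Leray spectral sequence for $\B\m_{n,T}\to T$ together with the computations $\R^1\pi_\ast\G_m=\Z/n\Z$ and $\R^2\pi_\ast\m_n=0$ and the vanishing of $\H^1(T,\Z/n\Z)$ and $\Br(T)$ over the strictly Henselian $T$. You instead stay entirely with schemes: the $\m_n$-torsor $\widetilde U=V\setminus V_0\to U$ coming from the chart, Gabber rigidity plus absolute purity to kill $\H^1(\widetilde U,\m_m)$ and $\H^2(\widetilde U,\m_m)$, and Hochschild--Serre to reduce $\Br(U)$ to $\H^2(\m_n,\Gamma(\widetilde U,\G_m))$, computed by hand. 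The paper's route is shorter given the cited facts about the cohomology of $\B\m_n$; yours avoids stack cohomology altogether at the cost of the unit-group bookkeeping, and both ultimately rest on $R$ being Henselian with algebraically closed residue field.

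Three small repairs. First, the logarithm does not literally apply: $V$ is Henselian along $V_0$, not $J$-adically complete (where $J$ is the ideal of $V_0$), so $\log(1+J)$ need not converge. Unique $n$-divisibility of $1+J$ is nonetheless immediate from the Henselian pair condition: $x^n-u$ has the simple root $1$ modulo $J$ (as $n$ is invertible), giving existence, and uniqueness holds because two roots differ by an $n$-th root of unity in $1+J$, while roots of unity lie in $R$ and $J\cap R=0$ via the retraction $\Gamma(V)\to R$; the same condition ($J$ in the Jacobson radical) gives the surjectivity $\Gamma(V)^\times\to R^\times$ in your exact sequence. Second, $E_2^{p,2}=0$ requires the full group $\H^2(\widetilde U,\G_m)$ to vanish, not just its Brauer part; this is fine because $\widetilde U$ is regular, integral, and Noetherian, so $\H^2(\widetilde U,\G_m)$ is torsion by Grothendieck's injection into the Brauer group of the function field, and you have killed all its $m$-torsion. (Also, $\H^2(U,\G_m)$ is strictly a quotient of $E_2^{2,0}$ via $d_2$ from $E_2^{0,1}$, but since you show $E_2^{2,0}=0$ this is harmless.) Third, your $V$, the Henselization of the chart along $V_0$, is a priori a slightly less local object than the paper's, which first strictly Henselizes $Z'$ at $z$ and then Henselizes along $T$; the discrepancy is immaterial, since your argument uses only that $V$ is affine, regular, Henselian along $V_0\cong\spec R$, with $\m_n$ acting freely off $V_0$, and these properties hold verbatim for the chart $W\times_{Z'}Y$ of the paper's $\ms Y$.
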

\begin{proof} By purity, we have that $\Br(U)=\Br(\ms Y)$, so it suffices to
  show that the latter vanishes.  Since $Y$ is Henselian along $T$, we have by
  the usual deformation arguments that $\Br(\ms Y)=\Br(\B\m_{n,T})$, so it
  suffices to show that this last group is trivial.

  Consider the projection $\pi:\B\m_{n,T}\to T$.  The Leray spectral sequence
  yields $$\H^p(T,\R^q\pi_\ast\G_m)\Rightarrow\H^{p+q}(\B\m_{n,T},\G_m).$$  We
  know by \S 4.2 of \cite{paiitbgoaas} that $\R^2\pi_\ast\m_n=0$ and
  $\R^1\pi_\ast\G_m=\Z/n\Z$.  Since $R$ is Henselian with algebraically closed
  residue field we have that $\H^1(T,\Z/n\Z)=0$.  The sequence of low degree
  terms then shows that the pullback map $\H^2(T,\G_m)\to\H^2(\B\m_{n,T},\G_m)$
  is an isomorphism.  But, again because $R$ is Henselian with algebraically
  closed residue field, we know that $\H^2(T,\G_m)=\Br(T)=0$.  \end{proof}

\begin{cor}\label{C:sheafy} A reflexive Azumaya algebra on $Y$ has the form
  $\send(M)$, where $M$ is a reflexive $\ms O_Y$-module.  \end{cor}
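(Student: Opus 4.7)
The plan is to combine Lemma \ref{L:brtriv} with reflexive extension across the codimension-$2$ locus $T\subset Y$.  Write $j:U\inj Y$ for the open inclusion.  The restriction $\ms A|_U$ is an Azumaya algebra of some rank $n^2$, classified by a $\operatorname{PGL}_n$-torsor on $U$; the obstruction to lifting this torsor to a $\operatorname{GL}_n$-torsor is exactly its class in $\H^2(U,\G_m)=\Br(U)$, which vanishes by Lemma \ref{L:brtriv}.  The lift therefore exists, producing a locally free sheaf $N$ of rank $n$ on $U$ together with an isomorphism $\ms A|_U\simto\send(N)$.

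I would then set $M:=j_\ast N$.  Because $Z'$ is an $A_{n-1}$-type singularity bundle over $\spec R$, it is Cohen-Macaulay, and hence so are the Henselizations $Y'$ and $Y$; in particular $Y$ is $S_2$.  Since $T\subset Y$ has codimension $2$, the pushforward $j_\ast N$ is coherent and reflexive on $Y$, and therefore $\send(M)$ is also reflexive.  Both $\ms A$ and $\send(M)$ are then reflexive $\ms O_Y$-modules whose restrictions to $U$ are canonically isomorphic to $\send(N)$; since reflexive sheaves on an $S_2$ Noetherian scheme are determined by their restriction to the complement of a closed subscheme of codimension $\geq 2$, this isomorphism on $U$ extends uniquely to an isomorphism $\ms A\simto\send(M)$ on $Y$, as desired.

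The substance of the argument is entirely contained in Lemma \ref{L:brtriv}; the only minor point of care is verifying coherence and reflexivity of the extension $j_\ast N$, which reduces to the classical fact that the $A_{n-1}$-singularity, and families thereof, are Cohen-Macaulay.  I do not expect to recover any uniqueness of $M$ beyond the usual ambiguity of tensoring with an invertible sheaf, nor is such uniqueness required by the statement.
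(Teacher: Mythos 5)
Your proof is correct and takes essentially the same route as the paper's: both invoke Lemma \ref{L:brtriv} to write $\ms A|_U\cong\send(N)$ with $N$ locally free on $U$, then take the reflexive extension $M=j_\ast N$ and conclude because reflexive sheaves on $Y$ (which is $S_2$, indeed Cohen--Macaulay) are determined by their restriction to the complement of the codimension-$2$ locus $T$. Your extra verifications (Cohen--Macaulayness of the family of $A_{n-1}$-singularities, coherence and reflexivity of $j_\ast N$, and the $\operatorname{PGL}_n$-to-$\operatorname{GL}_n$ lifting phrasing of the Brauer-class vanishing) simply spell out details the paper's proof leaves implicit.
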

\begin{proof} Let $\ms A$ be a reflexive Azumaya algebra.  By Lemma
  \ref{L:brtriv} we know that $\ms A|_U\cong\send(V)$ with $V$ a locally free
  coherent sheaf on $U$.  If $M$ is the unique reflexive coherent extension of
  $V$ then $\send(M)$ is reflexive and isomorphic to $\ms A$ in codimension
  $1$, whence $\ms A\cong\send(M)$.  \end{proof}

\begin{prop} Suppose $\ms A$ is a reflexive Azumaya algebra of degree $r$ on
$Y$ such that the restriction $\ms A\tensor k$ is a reflexive Azumaya algebra
on $Y\tensor k$.  Then \begin{enumerate} \item $\ms A\cong\send(M)$ with $M$ a
      direct sum of indecomposable reflexive $\ms O_Y$-modules of rank $1$;
    \item there is a blt Azumaya algebra $\ms B$ on $\ms Y$ such that $\ms
      A=\pi_\ast\ms B$.  \end{enumerate} \end{prop}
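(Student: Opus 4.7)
The plan is to combine Corollary \ref{C:sheafy} with the classification of reflexive modules over an $A_{n-1}$ rational double point and the Henselian hypothesis on $Y$ to decompose $M$ as in (1), and then to build $\ms B$ on $\ms Y$ via the McKay correspondence to get (2).

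For (1), Corollary \ref{C:sheafy} gives $\ms A = \send(M)$ for a reflexive $\ms O_Y$-module $M$. The closed fiber $Y\tensor k$ is a strictly Henselian surface with $A_{n-1}$-singularity at the closed point of $T\tensor k$. Over such a singularity the indecomposable maximal Cohen--Macaulay modules --- which agree with reflexive modules in relative dimension $2$ --- are classified by the characters of $\m_n$: there are $n$ of them, each of rank $1$, by the classical McKay correspondence for $A_{n-1}$ singularities. Thus $M\tensor k$ decomposes as a sum of rank-$1$ indecomposables. Because $Y$ is Henselian along $T$, the finite $\Gamma(Y,\ms O_Y)$-algebra $\End_Y(M) = \Gamma(Y,\ms A)$ satisfies Krull--Schmidt and has idempotents lifting from its closed-fiber reduction; this upgrades the fiberwise decomposition to $M = \bigoplus_i M_i$ with $M_i\tensor k$ indecomposable reflexive of rank $1$. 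Flatness over $R$ then gives $\rk M_i = 1$, and each $M_i$ is reflexive as a summand of $M$.

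For (2), each rank-$1$ reflexive $M_i$ on $Y$ is the pushforward of a unique line bundle $L_i$ on $\ms Y$: the restriction $M_i|_U$ to $U := Y\setminus T$ is a line bundle, and since $\ms Y$ is smooth with $\ms Y\setminus T \risom U$, purity extends it uniquely to a line bundle $L_i$ on $\ms Y$; reflexivity together with codimension-$1$ agreement force $\pi_\ast L_i = M_i$. Setting $\ms L := \bigoplus_i L_i$ and $\ms B := \send(\ms L)$ produces an honest Azumaya algebra on $\ms Y$, and the identity $\pi_\ast \ms B = \ms A$ is again a codimension-$1$ check: both sides are reflexive on $Y$ (pushforward of a reflexive sheaf under the tame morphism $\pi$ is reflexive) and they agree on $U$, where $\pi$ is an isomorphism.

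The blt condition on $\ms B$ is verified at the generic point of $T$ in $\ms Y$, where the inertia is $\m_n$ and the associated representation is $\bigoplus_i \chi_i$ for the characters $\chi_i$ matching $M_i$ under McKay. The hypothesis that $\ms A\tensor k$ is reflexive Azumaya of degree $r$ (with $r\mid n$ from the ambient ramification data), combined with Proposition \ref{P:main-hered-az} applied at the dvr corresponding to the generic point of $T\tensor k$, should force $\{\chi_i\}$ to be the restriction of scalars of the regular representation of $\m_r$ --- precisely the type condition of Definition \ref{D:type}. I expect this last step, matching the character multiset coming from McKay on the closed fiber to the representation-theoretic type condition, to be the main technical obstacle; uniqueness of $\ms B$ follows from the analogue of Lemma \ref{L:local determination} applied to $\ms Y$.
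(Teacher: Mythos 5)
Your proof follows the paper's own argument essentially step for step: Corollary \ref{C:sheafy} plus the McKay correspondence on the closed fiber, lifting the resulting idempotents along the Henselian pair to decompose $M$ into rank-one reflexive summands, and then realizing each summand as $\pi_\ast$ of an invertible sheaf on $\ms Y$ and checking $\ms A\cong\pi_\ast\send\bigl(\bigoplus_i L_i\bigr)$ in codimension $1$. The one place you diverge is your final paragraph: the ``main technical obstacle'' you flag is not actually a condition to be verified, because the stacky locus of $\ms Y$ is the section $T$, which has codimension $2$ in $Y$, while the blt condition is a type condition at codimension-$1$ generic points of ramification divisors, of which $\ms Y$ has none --- so it is vacuous in this local model (and in particular Proposition \ref{P:main-hered-az} cannot be applied at the generic point of $T$, whose local ring is two-dimensional rather than a dvr). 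This is consistent with the paper's proof, which constructs $\ms B$ as endomorphisms of the lifted locally free sheaf and verifies only the Azumaya property and $\ms A\cong\pi_\ast\ms B$; the genuine codimension-$1$ type conditions live on $\widetilde X$ away from the crossings and are handled in Propositions \ref{P:main-hered-az} and \ref{P:local structure}, not in this local analysis.
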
 \begin{proof} By assumption
  we have that $\ms A\tensor k\cong\send(V)$ with $V$ a reflexive $\ms
  O_{Y\tensor k}$-module.  But $Y\tensor k$ is the Henselization of an
  $A_{n-1}$-singularity, so we know that $V$ decomposes as a direct sum of
  reflexive modules of rank $1$ by the McKay correspondence \cite{MR769609}.
  This gives rise to a full set of idempotents $e_j\in\ms A(Y\tensor k)$,
  $j=1,\ldots,r$.  Since $Y$ is Henselian, these idempotents lift to global
  sections $\widetilde e_j$ of $\ms A$.  By Corollary \ref{C:sheafy} we have
  that $\ms A\cong\send(M)$.  The idempotents $\widetilde e_j$ decompose $M$ as
  a direct sum of submodules of rank $1$.  Since $M$ is reflexive, each of
  these summands is reflexive, proving the first statement.

  To prove the second statement, note that a reflexive sheaf of rank $1$ on $Y$
  is the pushforward along $\pi$ of a unique invertible sheaf on $\ms Y$.
  Thus, $M$ is isomorphic to $\pi_\ast N$ for some locally free sheaf $\ms V$
  on $\ms Y$.  The Azumaya algebra $\ms B=\send(N)$ has reflexive pushforward
  that is canonically isomorphic to $\ms A$ over $U$, whence $\ms
  A\cong\pi_\ast\ms B$, as desired.  \end{proof}

\subsection{Proof that $\Phi$ is a proper bijection} \label{sec:properness-phi}

In this section we show that $\Phi:\azumaya_{\widetilde
X/S}^\alpha\to\naive_{X/S}^\alpha$ is a proper morphism.  Since it is already
locally of finite presentation and bijective, it suffices to show the following
valuative criterion.

\begin{prop} If $R$ is a complete dvr over $S$ then any na\"\i ve family $A$ on
  $X_R$ has the form $\pi_\ast\ms A$, where $\ms A$ is an Azumaya family on
  $\widetilde X_R$.  \end{prop}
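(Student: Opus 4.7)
The plan is to factor $\pi$ as $\widetilde X_R \xrightarrow{\rho} \widebar X_R \xrightarrow{\varpi} X_R$ through the singular intermediate stack $\widebar X_R$ and descend in two steps: from $X_R$ to $\widebar X_R$ using Proposition \ref{P:local structure}, then from $\widebar X_R$ to $\widetilde X_R$ using the local analysis of Section \ref{sec:refl-azum-algebr}.

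First, invoke Proposition \ref{P:local structure} to produce a (unique) reflexive blt-type Azumaya algebra $\widebar{\ms A}$ on $\widebar X_R$ with $\varpi_\ast\widebar{\ms A}\cong A$, reducing the problem to exhibiting a blt Azumaya algebra $\ms A$ on $\widetilde X_R$ with $\rho_\ast\ms A\cong\widebar{\ms A}$. The morphism $\rho$ is an isomorphism over the open complement $V\subset\widebar X_R$ of the finite, fiberwise $A_{n-1}$-singular locus of $\widebar X_R/R$, so over $\widetilde V:=\rho^{-1}(V)$ the pullback of $\widebar{\ms A}$ is already an Azumaya algebra $\ms A_V$. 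To extend across the exceptional locus, one henselizes along the $R$-section through each singular point $z$, landing precisely in the setting of Section \ref{sec:refl-azum-algebr}; the final proposition of that section, applied to the pullback of $\widebar{\ms A}$ to the henselian neighborhood $Y$, produces a blt Azumaya algebra $\ms B_z$ on the local cover $\ms Y$ of $\widetilde X_R$ whose pushforward to $Y$ is $\widebar{\ms A}|_Y$. The restriction of $\ms B_z$ to $\ms Y$ minus the exceptional fiber agrees with $\ms A_V$ pulled back there, since both are reflexive Azumaya algebras with common pushforward $\widebar{\ms A}|_{V\cap Y}$; this gives a canonical gluing producing a global blt Azumaya algebra $\ms A$ on $\widetilde X_R$. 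The identity $\rho_\ast\ms A\cong\widebar{\ms A}$ holds over $V$ by construction and in codimension one everywhere, hence globally by reflexivity, so $\pi_\ast\ms A=\varpi_\ast\rho_\ast\ms A\cong A$. Flatness over $R$, fiberwise membership in $\azumaya_{\widetilde X/S}^\alpha$, and the fiberwise blt condition all transfer through the construction from the corresponding properties of $A$.

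The hard part will be the gluing step. To upgrade the henselian local data provided by Section \ref{sec:refl-azum-algebr} into \'etale-local data on $\widetilde X_R$, a standard limit/approximation argument is needed to spread $\ms B_z$ out to a genuine \'etale neighborhood of $\rho^{-1}(z)$. The compatibility of the $\ms B_z$ with $\ms A_V$ on overlaps then rests on a uniqueness statement in the spirit of Lemma \ref{L:local determination}: any two reflexive Azumaya algebras on $\ms Y$ whose pushforwards to $Y$ agree must themselves coincide, since by Corollary \ref{C:sheafy} each is recovered as $\send$ of the unique reflexive extension of a common generic Azumaya image. Once this uniqueness is in place, the remaining verifications reduce to properties already built into membership of $A$ in $\naive_{X/S}^\alpha(R)$.
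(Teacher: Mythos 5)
Your proposal is correct in substance and follows the same skeleton as the paper's proof: factor $\pi$ through $\widebar X_R$, apply Proposition \ref{P:local structure} to get the reflexive algebra $\ms B$ on $\widebar X_R$, and use the local analysis of Section \ref{sec:refl-azum-algebr} to handle the $A_{n-1}$-singular locus. The one place you diverge is exactly the step you flag as hard, and the paper's treatment shows it can be dissolved rather than solved: instead of gluing henselian-local algebras $\ms B_z$ to $\ms A_V$ (which, as you set it up, requires spreading out from henselizations to \'etale neighborhoods and checking cocycle compatibilities), the paper defines the candidate \emph{globally and canonically} as $\ms A := i_\ast\ms B_V$, where $i:V\inj\widetilde X_R$ is the inclusion of the common smooth locus, after first invoking Koll\'ar's theory of hulls to identify $\ms B\simto j_\ast\ms B_V$. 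With the global object in hand, the only thing left is the \emph{local property} that $i_\ast\ms B_V$ is Azumaya, which is \'etale-local and hence may be checked directly over the henselian $Y$ of Section \ref{sec:refl-azum-algebr}: there $\ms B\cong p_\ast\ms A'$ with $\ms A'$ Azumaya, and the adjunction $\ms A'\to i_\ast\ms A'_V$ is an isomorphism because $\widetilde X$ is regular, $\ms A'$ is locally free, and the exceptional locus has codimension $2$. So no approximation argument and no gluing data are needed — your ``canonical gluing'' is made literal by taking $i_\ast$ of the restriction, which is precisely the uniqueness mechanism you invoke. One small correction: your uniqueness claim for reflexive Azumaya algebras on $\ms Y$ with common pushforward should not be justified via Corollary \ref{C:sheafy}, which concerns reflexive Azumaya algebras on the coarse space $Y$; the right justification is the codimension-$2$ purity argument on the regular stack $\ms Y$ (two locally free algebras agreeing outside codimension $2$ agree), which is the same mechanism the paper uses in its final paragraph.
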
 \begin{proof} By Propostion \ref{P:local
  structure}, we know that $A$ is the pushforward of a family $\ms B$ of
  reflexive Azumaya algebras on $\widebar X$.  It thus suffices to show that
  $\ms B\cong p_\ast\ms A$, where $p:\widetilde X\to\widebar X$ is the natural
  morphism.

  Let $V\subset\widebar X$ be the smooth locus of $\widebar X/S$.  By
  construction there is a natural diagram $$\xymatrix{ & \widetilde X\ar[dd] \\
  V\ar[ur]^i\ar[dr]_j & \\ & \widebar X}$$ in which the diagonal arrows are
  fiberwise dense open immersions whose complements have codimension two in
  each geometric fiber.  By the theory of hulls \cite{kollar}, we have that the
  adjunction map $\ms B\to j_\ast\ms B_V$ is an isomorphism.  Since $pi=j$, to
  prove the result it suffices to prove that $i_\ast\ms B_V$ is an Azumaya
  algebra on $\widetilde X$.

  This latter statement is \'etale local on $X$, so we may replace $X$ by the
  local Henselian scheme $Y$ of section \ref{sec:refl-azum-algebr}.  In this
  case we have that $\ms B$ is isomorphic to $p_\ast\ms A$ for some Azumaya
  algebra $\ms A$. The algebra $i_\ast\ms B_V$ is thus isomorphic to $i_\ast\ms
  A_V$, and so it suffices to show that the adjunction map $a:\ms A\to
  i_\ast\ms A_V$ is an isomorphism.  But the stack $\widetilde X$ is regular
  and $\ms A$ is locally free, so $a$ is an isomorphism if and only if it is an
  isomorphism in codimension $1$.  Since $V$ has codimension $2$, we know that
  $a$ is an isomorphism at every codimension $1$ point, and the result follows.
\end{proof}

\subsection{Proof that $\Phi$ need not be an isomorphism}
\label{sec:moduli-problems-are}

In this section we prove that the map $\azumaya_{\widetilde
X/S}^\alpha\to\naive_{X/S}^\alpha$ need not be an isomorphism by exhibiting
examples for which the map on tangent spaces is not surjective.

Let $D\subset X$ be a smooth divisor in a projective surface such that
\begin{enumerate} \item there is an infinitesimal deformation $\mc D\subset
      X\tensor k[\eps]$ for which $\ms O_{X\tensor k[\eps]}(\mc D-D\tensor
      k[\eps])$ is non-torsion in $\Pic(X_{k[\eps]})$; \item there is a blt
        Azumaya algebra $\ms A$ on $\widetilde X$ for which the pushforward
        $\pi_\ast\ms A$ is a maximal order on $X$ of period $n$ and
        $\H^2(\widetilde X,\ms A/\ms O_{\widetilde X})=0$.  \end{enumerate}

Write $\widetilde X'=X_{k[\eps]}\{\mc D^{1/n}\}$ and (by abuse of notation)
$\pi:\widetilde X'\to X_{k[\eps]}$ for the projection to the coarse moduli
space.  By deforming $\ms A$ to $\widetilde X'$ we will make a tangent vector
to $\naive_{X/S}$ that does not lie in the image of the tangent map to
$\azumaya_{\widetilde X/S}$.

\begin{prop}\label{not-the-same} There is a deformation $\ms A'$ of $\ms A$ to
  a blt Azumaya algebra on $\widetilde X'$ such that the resulting object
  $A'=\pi_\ast\ms A'$ of $\naive_{X/S}(k[\eps])$ is not in the image of
  $\azumaya_{\widetilde X/S}$.  \end{prop}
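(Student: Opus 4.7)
The plan is to use the hypothesis $\H^2(\widetilde X,\ms A/\ms O_{\widetilde X})=0$ to lift $\ms A$ to a blt Azumaya algebra $\ms A'$ on the genuinely deformed root stack $\widetilde X'$, then push it forward to produce an object of $\naive_{X/S}^\alpha(k[\eps])$ whose ramification divisor is $\mc D$ rather than $D\otimes k[\eps]$. Since any algebra in the image of $\Phi(k[\eps])$ is by definition the pushforward of a blt Azumaya algebra on the \emph{trivially} thickened root stack $\widetilde X\otimes k[\eps]$, and hence has non-Azumaya locus equal to $D\otimes k[\eps]$, the non-torsion hypothesis will produce the desired contradiction.

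For the construction of $\ms A'$, I would first observe that $\widetilde X'\to\spec k[\eps]$ is smooth, being the root construction of the smooth $k[\eps]$-surface $X_{k[\eps]}$ along the smooth relative divisor $\mc D$, and that its special fiber is canonically $\widetilde X$. The square-zero thickening $\widetilde X\inj\widetilde X'$ has ideal sheaf isomorphic to $\ms O_{\widetilde X}$, so the standard $PGL_n$-gerbe deformation theory places the obstruction to lifting $\ms A$ in $\H^2(\widetilde X,\ms A/\ms O_{\widetilde X})$, which vanishes by hypothesis. Pick any lift $\ms A'$. It is automatically blt because the representation of $\m_{e_i}$ associated to $\ms A'$ along each component of the relative ramification divisor is a discrete, locally constant invariant in flat families of Azumaya algebras on root stacks, so it agrees with that of $\ms A$. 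Proposition \ref{P:can push forward} then shows that $A':=\pi_\ast\ms A'\in\naive_{X/S}^\alpha(k[\eps])$ and reduces to $\pi_\ast\ms A$ modulo $\eps$, so $A'$ is a tangent vector to $\naive_{X/S}^\alpha$ at $\pi_\ast\ms A$.

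Finally, suppose for contradiction that $A'=q_\ast\ms B'$, where $q:\widetilde X\otimes k[\eps]\to X_{k[\eps]}$ is the coarse moduli map of the trivial thickening and $\ms B'$ is a blt Azumaya algebra. By Proposition \ref{P:push forward get terminal} applied over $k[\eps]$, the non-Azumaya locus of such a pushforward equals, as a Cartier divisor, the image of the stacky locus of the source, namely $D\otimes k[\eps]$. But the non-Azumaya locus of $A'$, as computed from our construction, is $\mc D$. Thus $\mc D$ and $D\otimes k[\eps]$ would coincide as Cartier divisors on $X_{k[\eps]}$, forcing $\ms O_{X_{k[\eps]}}(\mc D-D\otimes k[\eps])$ to be trivial and hence torsion, contradicting the first hypothesis. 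The main obstacle is the deformation-theoretic step: one must verify that the obstruction to lifting $\ms A$ across the \emph{nontrivial} smooth thickening $\widetilde X\inj\widetilde X'$ really does lie in $\H^2(\widetilde X,\ms A/\ms O_{\widetilde X})$, which is standard for smooth Deligne--Mumford stacks but deserves explicit care in the presence of the root-stack structure along $\mc D$.
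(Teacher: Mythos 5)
Your first half is fine and matches the paper's (largely implicit) setup: the thickening $\widetilde X\inj\widetilde X'$ has ideal $\eps\ms O_{\widetilde X'}\cong\ms O_{\widetilde X}$, the obstruction to deforming the Azumaya algebra lies in $\H^2(\widetilde X,\ms A/\ms O_{\widetilde X})$, which vanishes by hypothesis, and blt-ness of the lift follows since the type is locally constant in flat families. The gap is in the second half, and it is exactly at the step you treat as obvious. Over $k[\eps]$ the divisors $\mc D$ and $D\tensor k[\eps]$ have the \emph{same underlying closed subset} of $X_{k[\eps]}$, namely $D$; they differ only in their ideal sheaves. The non-Azumaya locus of $A'$ is a priori only a closed subset (the support of $\coker(A'\tensor (A')^\circ\to\send(A'))$), so it cannot distinguish $\mc D$ from $D\tensor k[\eps]$: set-theoretically both candidate pushforwards are non-Azumaya precisely along $D$. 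Your phrase ``the non-Azumaya locus \ldots as a Cartier divisor'' presupposes a canonical infinitesimally sensitive scheme structure on this locus, intrinsic to the abstract $\ms O_{X_{k[\eps]}}$-algebra $A'$ (intrinsicness is essential, since in the contradiction you only assume $A'$ is \emph{isomorphic} to a pushforward from the trivial thickening). No such structure is constructed or cited, and constructing one --- say via the discriminant of the reduced trace form or a Fitting ideal --- and then verifying by a local computation that it equals a multiple of $\mc D$ rather than of $D\tensor k[\eps]$ is precisely the nontrivial content of the proposition, which your argument defers rather than supplies. Your closing worry about the lifting step is misplaced; that step is standard, whereas this divisor identification is where the real work lives.

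The paper resolves this by replacing the non-Azumaya locus with a genuinely intrinsic and infinitesimally sensitive invariant: the dualizing bimodule. Lemma \ref{L:dualizing-computation} computes, via duality for the tame coarse map $\pi$ and a local calculation (reduced to the constant local model and ultimately to Proposition 5 of \cite{thong}), that $\omega_{A'}^{\tensor n}\cong A'\tensor\omega_{X_{k[\eps]}/k[\eps]}^{n}((n-1)\mc D)$, while a pushforward from $\widetilde X\tensor k[\eps]$ would satisfy the same formula with $D_{k[\eps]}$ in place of $\mc D$. Lemma \ref{L:det-norm} then uses the reduced norm to show $\Pic(X_{k[\eps]})\to\Pic(A')$ is injective modulo torsion, so comparing the two formulas forces $\ms O(\mc D-D_{k[\eps]})$ to be torsion in $\Pic(X_{k[\eps]})$, contradicting the choice of $\mc D$. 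If you want to salvage your route, you would need to carry out the analogous local computation for the discriminant divisor of $A'$ and prove its compatibility with base change to $k[\eps]$ --- at which point you will have reproduced, in different clothing, the local analysis the paper performs for $\omega_{A'}$.
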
 \begin{proof} The key to the proof
  is to relate the dualizing bimodule of $A'$ to the divisor class $\mc D$.  

  \begin{lem}\label{L:dualizing-computation} Given $\widetilde X'$, $\ms A'$
    and $A'$ as above, there is a natural isomorphism $$ \omega_{A'}^{\otimes
    n} \cong A' \otimes_{{\ms O}_{X_{k[\eps]}}} \omega^n_{X_{k[\eps]}/k[\eps]}
    ((n - 1)\mc D).  $$ \end{lem} Let us briefly accept Lemma
  \ref{L:dualizing-computation} and see how to complete the proof of
  Proposition \ref{not-the-same}.  

  \begin{lem}\label{L:det-norm} The pullback map $\Pic(X_{k[\eps]})\to\Pic(A')$
    is injective modulo torsion.  \end{lem} \begin{proof} The reduced norm
    defines a sequence $\ms O^\times\to(\ms A')^\times\to\ms O^\times$ such
    that the composition is raising to the $n$th power (and thus surjective in
    the \'etale topology).  Applying the \'etale $\H^1$ functor we see that the
    map $\H^1(X,\ms O_X^\times)\to\H^1(X,(A')^\times)$ is injective modulo
    $n$-torsion.  Finally, we note that invertible $A'$-bimodules are
    classified by the latter cohomology group.  \end{proof}

  If $A'$ is in the image of $\azumaya_{\widetilde X/S}(k[\eps])$, the
  analogous computation with $D_{k[\eps]}$ in place of $\mc D$ would yield an
  isomorphism between the bimodules $\omega_{A'}^{\tensor n}$ and
  $A'\tensor_{\ms
    O_{X_{k[\eps]}}}\omega^n_{X_{k[\eps]}/k[\eps]}((n-1)D_{k[\eps]})$.
    Applying Lemma \ref{L:det-norm}, we conclude that $\ms O(\mc
    D-D_{k[\eps]})$ is torsion in $\Pic(X_{k[\eps]})$, contrary to our original
    hypothesis.  \end{proof}

It remains to prove Lemma \ref{L:dualizing-computation}.

\begin{proof}[Proof of Lemma \ref{L:dualizing-computation}] To simplify
  notation, write $X'=X_{k[\eps]}$ and write $\omega_{X'}$ for the relative
  dualizing sheaf over $k[\eps]$.  Recall that the dualizing bimodule is given
  by the sheaf $\shom_{\ms O_{X'}}(A',\omega_{X'})$.  Writing $A'=\pi_\ast\ms
  A'$ and using duality, we have isomorphisms of bimodules

  \begin{align*} \omega_{A'}=\shom_{\ms O_{X'}}(\pi_\ast\ms
    A',\omega_{X'})&=\pi_\ast\shom(\ms A',\pi^!\omega_{X'})\\
    &=\pi_\ast\shom(\ms A',\omega_{\widetilde X'})=\pi_\ast(\ms
    A'\tensor\omega_{\widetilde X'}).  \end{align*}

  As an $\ms A'$-bimodule, we have that $(\ms A'\tensor\omega_{\widetilde
  X'})^{\tensor n}\cong\ms A'\tensor\omega_{X'}^{\tensor n}((n-1)\mc D)$.  To
  see this, note that we can locally write $\widetilde X'=[\spec\ms
    O_{X'}[z]/(z^n-t)/\m_n]$, where $t$ is a local equation for $\mc D$;
    computing the relative differentials immediately yields the result.

  There is a natural map $$(\pi_\ast(\ms A'\tensor\omega_{\widetilde
  X'}))^{\tensor n}\to\pi_\ast((\ms A'\tensor\omega_{\widetilde X'})^{\tensor
  n})$$ giving rise (using the computation of the preceding paragraph) to a map
  $$\phi:\omega_{A'}\to A'\tensor\omega_{X'}^{\tensor n}((n-1)D)$$ that we wish
  to show is an isomorphism.  

  Note that an \'etale-local model for $X',A',\ms A'$ around a closed point of
  $X'$ is given by the trivial family whose fiber is the standard cyclic
  algebra.  Thus, to prove that $\phi$ is an isomorphism it suffices to prove
  it for the local constant family, and thus (by compatibility with pullback)
  for the local family over a smooth surface over $k$.  But this is Proposition
  5 of \cite{thong}.  \end{proof}

To give a concrete example, let $X=E\times E$ for a smooth projective curve of
genus $1$ over an algebraically closed field of characteristic $0$ and let
$D=D_1+D_2$ be the sum of two disjoint closed fibers of the second projection.
Let $E'\subset E$ be the complement of the image of $D$ under $\pr_2$.  There
is a finite covering $C\to E'$ of degree $2$ that is totally ramified at both
points of $E\setminus E'$, giving a class $\alpha\in\H^1(E',\Z/2\Z)$.  Choosing
any $\beta\in\H^1(E,\mu_2)$ we can form the class
$\pr_2^\ast\alpha\cup\pr_1^\ast\beta\in\H^2(E\times E',\m_2)$, giving a Brauer
class $\gamma\in\Br(k(E\times E))$.  Elementary computations show that the
ramification extension of this Brauer class on each component of $D$ is given
by the class of $\beta$, so that maximal orders must be hereditary along $D$.

Any non-constant infinitesimal deformation of $D$ (e.g., that induced by moving
along $E$) will give a $\mc D$ as in the statement of Proposition
\ref{not-the-same}.  It remains to show that there is an unobstructed Azumaya
algebra on the stack $\widetilde X\to E\times E$ branched over $D$.  Since
$\Br(\widetilde X)=\Br'(\widetilde X)$, there is certainly some Azumaya algebra
in that class.  Producing one that is unobstructed is a standard argument that
can be found written out for projective surfaces in Proposition 3.2 of
\cite{MR2060023}.  We omit the details.

\begin{remark} The construction given here also shows that fixing the Brauer
  class to be $\alpha$ \'etale-locally on the base of families in Definition
  \ref{D:naive} does not ameliorate the situation, as any infinitesimal
  deformation of the class of $\alpha$ on $E\times E'$ is constant.
\end{remark}

\section{Generalized Azumaya algebras on $\widetilde X$}
\label{sec:gener-azum-algebr} \setcounter{lem}{0} \numberwithin{lem}{section}

In this section we will suppose that $S=\spec k$ is the spectrum of an
algebraically closed field.  We will compactify the stack $\azumaya_{\widetilde
X/S}^\alpha$ and show that this compactification has a relative virtual
fundamental class when $\alpha$ has order $n$ on each geometric fiber of $X/S$.
The constructions described here are almost identical to those in
\cite{MR2579390}.  By the proper and smooth base change theorems in \'etale
cohomology, any family in $\azumaya_{\widetilde X/S}^\alpha$ defines a section
of the finite constant sheaf (scheme!) $\R^2f_\ast\m_n$, where $f:\widetilde
X\to S$ is the structural morphism, giving a morphism of stacks
$$c:\azumaya_{\widetilde X/S}^\alpha\to\R^2f_\ast\m_n.$$  Thus, to compactify
$\azumaya_{\widetilde X/S}^\alpha$ we will compactify each fiber.

Write $\widebar\alpha\in\H^2(\widetilde X,\m_n)$ for a lift of $\alpha$ via the
Kummer sequence.  The fiber of $c$ over $\widebar\alpha$ will be denoted
$\azumaya_{\widetilde X/S}^{\widebar\alpha}$.  Let $p:\ms X\to\widetilde X$ be
a $\m_n$-gerbe representing the class $\widebar\alpha$.  That there is such an
Artin stack is discussed in Section 2.4 of \cite{paiitbgoaas}.  In Sections 2.2
and 2.3 of \cite{paiitbgoaas} or in \cite{MR2388554} the reader will also find
a discussion of the theory of $\ms X$-twisted sheaves in connection with the
Brauer group.

\begin{defn}\label{regular twisted sheaf} A torsion free $\ms X$-twisted sheaf
  $\ms F$ is \emph{blt\/} if the $\ms O_{\widetilde X}$-algebra
  $p_\ast\send(\ms F)$ is blt on the Azumaya locus.  \end{defn}

Let $\Sh_{\ms X}$ denote the stack of torsion free blt $\ms X$-twisted sheaves
of rank $n$ with trivial determinant.  The basic result on the stack $\Sh_{\ms
X}$ is the following.

\begin{prop}\label{stack of twisted sheaves is artin} The stack $\Sh_{\ms X}$
  is an Artin stack locally of finite presentation over $S$.  Moreover,
  $\Sh_{\ms X}$ is a $\G_m$-gerbe over an algebraic space $\mSh_{\ms X}$ with
  proper connected components.  \end{prop}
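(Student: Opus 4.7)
The plan is to combine standard representability results for twisted sheaves on tame Deligne--Mumford stacks with an analysis of the blt condition, extract the gerbe structure by rigidifying against scalar automorphisms, and verify properness of connected components via boundedness together with Langton's theorem in the twisted setting.

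For algebraicity and local finite presentation, I would start from the stack of all coherent $\ms X$-twisted sheaves on $\widetilde X$, which is an Artin stack locally of finite presentation by the general moduli theory developed in \cite{MR2579390}. Inside this ambient stack I would carve out $\Sh_{\ms X}$ using four conditions, each preserving algebraicity and local finite presentation. Torsion freeness cuts out an open substack by the standard characterization via associated primes. Having rank $n$ is a locally constant numerical invariant, hence defines a clopen substack. Trivial determinant is realized as the fiber over the identity section of the determinant morphism to $\Pic_{\widetilde X/S}$. Finally, the blt condition is open: by Proposition \ref{P:push forward get terminal} and the analysis of Section \ref{sec:hered-orders-over}, being blt is equivalent to $p_\ast\send(\ms F)$ having the prescribed type at each generic point of the ramification divisor, and typed Azumaya algebras form an open substack inside the stack of hereditary orders.

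For the gerbe structure, scalar multiplication provides a central embedding $\G_m\hookrightarrow\Aut(\ms F)$ in the inertia of $\Sh_{\ms X}$. The blt hypothesis, combined with rank $n$ and trivial determinant, forces $p_\ast\send(\ms F)$ to be a maximal order in the division algebra of class $\alpha$ on each geometric fiber, and hence $\End(\ms F)=k$ so that the geometric automorphism group is exactly $\G_m$. Applying the standard rigidification construction against this central $\G_m$ (as in \cite{MR2579390}) yields an algebraic space $\mSh_{\ms X}$ and a $\G_m$-gerbe morphism $\Sh_{\ms X}\to\mSh_{\ms X}$.

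The main obstacle is properness of the connected components of $\mSh_{\ms X}$, which I would verify through boundedness, separatedness, and the valuative criterion. Boundedness on each component follows from fixing the rank, determinant, and (locally constant) second Chern class, together with the uniform local geometry imposed by the blt condition along the ramification divisor. Separatedness follows from the standard argument: two DVR-extensions of a given twisted sheaf agree in codimension $1$ and hence, by reflexivity and maximality of the associated orders, globally. For existence in the valuative criterion, I would invoke Langton's extension theorem in the twisted setting, as carried out in \cite{MR2579390}: given a family over the generic point of a complete DVR, one produces a torsion free extension over the DVR. Since the blt condition is open and the numerical invariants are fixed, it propagates to the closed fiber, so the extended family lies in $\Sh_{\ms X}$, completing the verification.
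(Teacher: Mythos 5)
Your proposal reconstructs, in outline, exactly what the paper outsources: the paper's own proof is a two-line citation to Sections 3 and 4 of \cite{Lieblich20114145}, whose content is the ambient-stack/openness/rigidification/boundedness/Langton package you describe. But the one substantive observation the paper does make is missing from your proposal, and its absence creates genuine gaps. The observation is that since $\alpha$ has period $n$ on each geometric fiber and the sheaves have rank $n$, every torsion free $\ms X$-twisted sheaf under consideration is \emph{automatically stable}: the index of $\alpha$ divides the rank of any nonzero twisted subsheaf, and period $n$ together with the existence of rank $n$ twisted sheaves forces the index to equal $n$, so there are no proper nonzero twisted subsheaves at all. This single fact is what makes the cited machinery apply with proper components and no semistable boundary.

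The concrete gaps are these. First, in your valuative criterion you assert that the blt condition ``propagates to the closed fiber'' because it is open. Openness runs the wrong way: it says that if the \emph{special} fiber is blt then nearby fibers are, not that a blt generic fiber forces a blt limit. Langton gives you a torsion free extension over the dvr, but nothing in your argument places its closed fiber in $\Sh_{\ms X}$; arranging this requires either elementary modifications guided by the local analysis of types along the ramification divisor or the Langton-type theorem proved for precisely this class of sheaves in \cite{Lieblich20114145}. Second, your justification of $\End(\ms F)=k$ via ``$p_\ast\send(\ms F)$ is a maximal order in the division algebra'' only makes sense on the locally free locus, whereas the gerbe claim must hold at the torsion free boundary points too; there one argues instead that $\End(\ms F)$ injects into the generic fiber algebra, which is division since period $=$ index $=n$, and that an injective endomorphism of a coherent sheaf is bijective, so $\End(\ms F)$ is a finite-dimensional division algebra over $k=\widebar k$, hence $k$ --- which is again the automatic-stability mechanism. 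Likewise your separatedness argument ``by reflexivity and maximality of the associated orders'' is unavailable for torsion free non-reflexive limits; the standard argument uses that the limiting sheaves are stable (simple), so a nonzero map between two flat extensions agreeing generically is an isomorphism. In short, the skeleton is right, but without the stability observation the properness and gerbe steps as written do not close.
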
 \begin{proof} This proven in
  Sections 3 and 4 of \cite{Lieblich20114145}, once we note that any torsion
  free $\ms X$-twisted sheaf of rank $n$ is automatically stable when the
  Brauer class has period $n$.  \end{proof}

Let $\Sh^f_{\ms X}$ denote the locus of locally free $\ms X$-twisted sheaves.
The morphism $\ms V\mapsto p_\ast\send(\ms V)$ defines a morphism of stacks
$e:\Sh^f_{\ms X}\to\azumaya_{\widetilde X/S}^{\widebar\alpha}$.  

\begin{lem}\label{end epi} The morphism $e$ is an epimorphism of stacks.
\end{lem}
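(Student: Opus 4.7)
The plan is to show that $e$ is locally essentially surjective: given any scheme $T$ and any $\ms A\in\azumaya_{\widetilde X/S}^{\widebar\alpha}(T)$, I must produce an fppf cover $T'\to T$ and a locally free $\ms X$-twisted sheaf $\ms V$ of rank $n$ with trivial determinant such that $p_\ast\send(\ms V)\cong\ms A|_{T'}$. The argument rests on the standard correspondence between Azumaya algebras of degree $n$ representing a Brauer class $\widebar\alpha$ and locally free twisted sheaves of rank $n$ on the $\m_n$-gerbe representing $\widebar\alpha$, as developed in Sections 2.2--2.4 of \cite{paiitbgoaas} and in \cite{MR2388554}.

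Given $\ms A$, I first form the $\G_m$-gerbe $\ms Y\to\widetilde X_T$ of local splittings of $\ms A$: its objects over $U\to\widetilde X_T$ are pairs $(V,\phi)$ with $V$ a locally free $\ms O_U$-module of rank $n$ and $\phi\colon\send(V)\risom\ms A|_U$, and inertia $\G_m$ acts by scalars on $V$. The class of $\ms Y$ in $\H^2(\widetilde X_T,\G_m)$ is the Brauer class of $\ms A$, namely the image of $\widebar\alpha|_T$. The tautological sheaf on $\ms Y$ is locally free of rank $n$, and its endomorphism algebra tautologically descends to $\ms A$. Since the pushout of $\ms X_T$ along $\m_n\inj\G_m$ represents the same class in $\H^2(\widetilde X_T,\G_m)$, there is an equivalence of $\G_m$-gerbes with this pushout, unique up to twisting by a Picard class (which is irrelevant for the endomorphism algebra). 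Pulling the tautological sheaf back along this equivalence yields a locally free $\ms X_T$-twisted sheaf $\ms V$ of rank $n$ with $p_\ast\send(\ms V)\cong\ms A$.

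To land in $\Sh^f_{\ms X}$ I must further arrange that $\det\ms V$ is trivial. Since the inertia $\m_n$ acts on $\det\ms V$ with weight $n$, which is the trivial character, the line bundle $\det\ms V$ descends to a line bundle $L$ on $\widetilde X_T$. Passing to a further fppf cover $T'\to T$ on which $L|_{T'}$ admits an $n$-th root $M$ and replacing $\ms V$ by $\ms V\tensor p^\ast M^{-1}$ trivializes the determinant without altering $\send(\ms V)$, and hence without altering its pushforward. The blt condition on $\ms V$ is then automatic from Definition \ref{regular twisted sheaf}, since by construction $p_\ast\send(\ms V)\cong\ms A|_{T'}$ is blt by hypothesis.

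The main technical step is the comparison of the two gerbes $\ms Y$ and the $\G_m$-pushout of $\ms X_T$, together with the identification of the tautological sheaf on $\ms Y$ with a twisted sheaf on $\ms X_T$. This is the key content of the theory of twisted sheaves recalled in the references above, where one must carefully track the Picard-twist ambiguity in the comparison; once this matching is in hand, the remaining verifications---trivializing the determinant and inheriting the blt property---are formal.
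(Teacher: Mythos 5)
Your overall strategy---reduce to local essential surjectivity and run Giraud's splitting-gerbe construction---is the same as the paper's, but there is a genuine gap at the determinant step. You compare gerbes only at the level of $\G_m$ (i.e.\ Brauer) classes, and then claim that after an fppf cover $T'\to T$ the descended line bundle $L$ on $\widetilde X_{T'}$ acquires an $n$-th root. That claim is false in general: the obstruction is the Kummer class $\delta(L)\in\H^2(\widetilde X_T,\m_n)$, and when it is nonzero on a geometric fiber no base change on $T$ can kill it (for instance $\ms O(1)$ on $\P^2$ has no square root after any extension of the base). The deeper problem is that your argument never uses the hypothesis that the $\m_n$-class of $\ms A$ is $\widebar\alpha$, only that its Brauer class agrees; but by the Kummer sequence two $\m_n$-lifts of the same Brauer class differ exactly by a class $\delta(P)$ with $P\in\Pic(\widetilde X_T)$, and for an algebra whose class is a different lift $\widebar\alpha+\delta(P)$ every step of your proof goes through until the determinant step, where it must fail: the resulting $L$ satisfies $\delta(L)=\pm\delta(P)\neq 0$ (note $\delta(L)$ is well defined, since changing the gerbe equivalence by a Picard twist changes $L$ only by $n$-th powers). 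So the determinant step is precisely where the $\m_n$-level hypothesis has to enter, and your justification there does not supply it.

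The gap is repairable but requires real input: since $c(\ms A)=\widebar\alpha$ as a section of $\R^2f_\ast\m_n$, one gets $\delta(L)=0$ on every geometric fiber, so $L$ lies fiberwise in $n\Pic$; after reducing (as the paper does, using that both stacks are locally of finite presentation) to a strictly Henselian base, an $n$-th root at the closed fiber lifts along the multiplication-by-$n$ map on the relative Picard scheme, which is \'etale because $n$ is invertible. The paper sidesteps all of this by a better choice of gerbe: it takes the stack of isomorphisms $\send(V)\simto A$ together with a trivialization $\det V\simto\ms O$. This is already a $\m_n$-gerbe (not a $\G_m$-gerbe), its class is the canonical $\m_n$-class of $A$, namely $\widebar\alpha$, and its tautological sheaf is a rank-$n$ twisted sheaf with trivial determinant by construction, so no root extraction is ever needed.
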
 \begin{proof} Since both stacks are locally of finite presentation,
  it suffices to prove that if $S$ is strictly Henselian and $A$ is an Azumaya
  on $\widetilde X_S$ with Brauer class $\alpha$, then $A$ is of the form
  $p_\ast\send(\ms V)$ for $\ms V$ a blt locally free $\ms X_S$-twisted sheaf.

  This follows immediately from Giraud's description of the cohomology class in
  $\H^2(\widetilde X,\m_n)$ associated to $A$: one takes the stack of
  isomorphisms $\send(V)\simto A$ with $V$ locally free with trivialized
  determinant $\det V\simto\ms O$.  This is a $\m_n$-gerbe $\ms X$, and the
  sheaves $V$ glue to give an $\ms X$-twisted sheaf of rank $n$ with trivial
  determinant.  \end{proof}

Let $G=\Pic_{\widetilde X/S}[n]$ be the (finite) $n$-torsion subgroupscheme of
the relative Picard scheme.  Given an invertible sheaf $\ms L$ with a
trivialization $\ms L^{\tensor n}\simto\ms O_{\widetilde X}$ there is an
induced $1$-morphism $\tensor\ms L:\Sh_{\ms X}\to\Sh_{\ms X}$.

\begin{lem}\label{action of torsion} The morphisms $\tensor\ms L$ defined above
  as $\ms L$ ranges over a set of representatives for $G$ define an action
  $G\times\mSh_{\ms X}\to\mSh_{\ms X}$.  \end{lem}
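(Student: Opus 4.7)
The plan is to first verify that each $\tensor\ms L$ defines a $1$-isomorphism of the stack $\Sh_{\ms X}$, then check that the induced morphism on the coarse space $\mSh_{\ms X}$ depends only on the class $[\ms L]\in G$, and finally verify the group axioms.

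For a fixed pair $(\ms L,\phi\colon\ms L^{\tensor n}\simto\ms O_{\widetilde X})$ representing $g\in G$, tensoring by $\ms L$ preserves rank, torsion-freeness, and the $\ms X$-twisted structure (since $\ms L$ is untwisted). It carries the trivialization of the determinant to a trivialization via the composition $\det(\ms F\tensor\ms L)\simto\det\ms F\tensor\ms L^{\tensor n}\simto\ms O_{\widetilde X}$ using $\phi$. The blt property of Definition \ref{regular twisted sheaf} is preserved because there is a canonical isomorphism $\send(\ms F\tensor\ms L)\simto\send(\ms F)$ (the line-bundle twists cancel), so $p_\ast\send(\ms F\tensor\ms L)=p_\ast\send(\ms F)$. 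Hence $\tensor\ms L\colon\Sh_{\ms X}\to\Sh_{\ms X}$ is a $1$-morphism, with inverse $\tensor\ms L^\vee$ (using $(\phi^\vee)$), and is therefore a $1$-isomorphism.

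Next I would show that $\tensor\ms L$ descends to $\mSh_{\ms X}$ and depends only on $[\ms L]\in G$. Tensoring by $\ms L$ commutes with scalar-multiplication of sheaves, so it commutes with the $\G_m$-action exhibiting $\Sh_{\ms X}\to\mSh_{\ms X}$ as a gerbe in Proposition \ref{stack of twisted sheaves is artin}; hence it descends to a self-morphism of $\mSh_{\ms X}$. If $(\ms L',\phi')$ is another representative of $g$, any isomorphism of line bundles $\psi\colon\ms L\simto\ms L'$ induces a natural isomorphism $\tensor\ms L\simto\tensor\ms L'$ on underlying sheaves; although $\psi$ may fail to intertwine $\phi$ and $\phi'$, the discrepancy is a global section of $\G_m$ that is absorbed by the gerbe structure and so becomes invisible after passage to $\mSh_{\ms X}$. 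Write $\rho_g\colon\mSh_{\ms X}\to\mSh_{\ms X}$ for the resulting morphism.

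Since $n$ is invertible on $S$ and by the standing hypothesis of Section \ref{sec:gener-azum-algebr} we have $S=\spec k$ with $k$ algebraically closed, $G$ is a finite \'etale group scheme, hence a constant finite group, so it suffices to verify the group-law axioms on points. Given $g,h\in G$ with chosen representatives $(\ms L_g,\phi_g)$ and $(\ms L_h,\phi_h)$, the pair $(\ms L_g\tensor\ms L_h,\phi_g\tensor\phi_h)$ represents $gh$, and there is a canonical natural isomorphism of $1$-morphisms $\tensor(\ms L_g\tensor\ms L_h)\simto\tensor\ms L_g\circ\tensor\ms L_h$. By the independence established above, $\rho_{gh}=\rho_g\circ\rho_h$; taking $(\ms O_{\widetilde X},\id)$ as representative of the identity gives $\rho_e=\id$. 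Thus $\rho\colon G\times\mSh_{\ms X}\to\mSh_{\ms X}$ is a group action. The main technical subtlety is precisely the independence step: identifying the ambiguity in $(\ms L,\phi)$ with the $\G_m$-gerbe direction of $\Sh_{\ms X}\to\mSh_{\ms X}$, after which everything else is formal.
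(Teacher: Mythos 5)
Your proposal is correct and follows essentially the same route as the paper: the core step in both is the induced trivialization $\det(\ms F\tensor\ms L)\simto\det(\ms F)\tensor\ms L^{\tensor n}\simto\ms O$, from which the action is deduced. The paper's proof consists of exactly this display plus ``this map induces the action,'' so your additional verifications---that $\tensor\ms L$ preserves the blt condition via $\send(\ms F\tensor\ms L)\cong\send(\ms F)$, that the choice of representative $(\ms L,\phi)$ only matters up to a global unit absorbed by the $\G_m$-gerbe structure after passing to $\mSh_{\ms X}$, and the group axioms via the constancy of $G$---are precisely the details the paper leaves implicit, correctly filled in.
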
 \begin{proof} Given an
  invertible sheaf $\ms L$ with a trivialization $\ms L^{\tensor n}\simto\ms O$
  and a torsion free sheaf $\ms F$ of rank $n$ with a trivialization $\det\ms
  F\simto\ms O$, we get a trivialization $$\det(\ms F\tensor\ms
  L)\simto\det(\ms F)\tensor\ms L^{\tensor n}\simto\ms O\tensor\ms O\simto\ms
  O.$$ This map induces the action.  \end{proof}

\begin{prop}\label{azumaya open part} The morphism $e:\ms V\mapsto
  p_\ast\send(\ms V)$ induces an isomorphism of stacks $$[\mSh^f_{\ms
  X}/G]\simto\azumaya_{\widetilde X/S}^{\widebar\alpha}.$$ \end{prop}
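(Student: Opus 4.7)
The plan is to construct $\Phi$ by descent---first from $\Sh^f_{\ms X}$ down to $\mSh^f_{\ms X}$, then down to the $G$-quotient---and to show it is an equivalence by exhibiting $\mSh^f_{\ms X}\to\azumaya_{\widetilde X/S}^{\widebar\alpha}$ as a $G$-torsor.

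First I would verify the factorization. The central scaling subgroup $\G_m\subset\underline\Aut_{\Sh^f_{\ms X}}(\ms V)$ acts trivially on $\send(\ms V)$ and hence on $A:=p_\ast\send(\ms V)$, so $e$ kills the $\G_m$-gerbe structure of $\Sh^f_{\ms X}\to\mSh^f_{\ms X}$ and descends uniquely to a morphism $\bar e\colon\mSh^f_{\ms X}\to\azumaya_{\widetilde X/S}^{\widebar\alpha}$. For $\ms L\in G$ the canonical isomorphism $\send(\ms V\tensor\ms L)\simto\send(\ms V)$ shows that $\bar e$ is invariant under the action of Lemma~\ref{action of torsion}, producing the desired $\Phi\colon[\mSh^f_{\ms X}/G]\to\azumaya_{\widetilde X/S}^{\widebar\alpha}$. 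Surjectivity of $\Phi$ is Lemma~\ref{end epi}.

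To conclude $\Phi$ is an isomorphism it suffices to show it is a monomorphism, i.e.\ that for any scheme $T$ and any $\ms V_1,\ms V_2\in\mSh^f_{\ms X}(T)$ with images $A_i:=p_\ast\send(\ms V_i)$, every algebra isomorphism $\psi\colon A_1\simto A_2$ arises from a unique pair $(\ms L,\phi)$ with $\ms L\in G(T)$ and $\phi\colon\ms V_1\tensor\ms L\simto\ms V_2$. Pulling $\psi$ back along $p\colon\ms X_T\to\widetilde X_T$ and using that $\send(\ms V_i)$ has trivial weight along $p$ (so that $p^\ast A_i\simto\send(\ms V_i)$ canonically), one obtains an algebra isomorphism $\tilde\psi\colon\send(\ms V_1)\simto\send(\ms V_2)$ on $\ms X_T$. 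Morita equivalence for the Azumaya algebra $\send(\ms V_1)$ then identifies the sheaf $\ms L:=\shom_{\send(\ms V_1)}(\ms V_1,\ms V_2^{\tilde\psi})$ as an invertible sheaf of weight zero on $\ms X_T$, which descends to a line bundle on $\widetilde X_T$ equipped with a canonical evaluation isomorphism $\ms V_1\tensor\ms L\simto\ms V_2$ recovering $\psi$. Taking determinants gives $\ms L^{\tensor n}\simto\ms O$, so $\ms L\in G(T)$.

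The main technical obstacle is verifying the Morita step in families: one must check that $\shom_{\send(\ms V_1)}(\ms V_1,\ms V_2^{\tilde\psi})$ is genuinely an invertible sheaf, not merely one with invertible fibers, and that the correspondence $(\ms L,\phi)\leftrightarrow\psi$ is compatible with the $G$-action of Lemma~\ref{action of torsion}. Both are \'etale-local assertions on $\widetilde X_T$, where $A_i$ becomes a matrix algebra and the statement reduces to the classical fact that simple modules over $M_n$ are determined up to a unique line-bundle twist. Combined with the surjectivity from Lemma~\ref{end epi}, this establishes the claimed equivalence $[\mSh^f_{\ms X}/G]\simto\azumaya_{\widetilde X/S}^{\widebar\alpha}$.
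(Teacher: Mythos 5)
Your proposal is correct and follows essentially the same route as the paper's proof: factor $e$ through the $\G_m$-rigidification $\mSh^f_{\ms X}$ and the $G$-action, obtain surjectivity from Lemma \ref{end epi}, and prove the map from $[\mSh^f_{\ms X}/G]$ is a monomorphism by showing every isomorphism $p_\ast\send(\ms V_1)\simto p_\ast\send(\ms V_2)$ arises from twisting by a line bundle whose $n$-th power is trivialized via the determinant trivializations. The only cosmetic difference is that you build the twisting line bundle explicitly by Morita theory as $\shom_{\send(\ms V_1)}(\ms V_1,\ms V_2^{\tilde\psi})$, where the paper simply invokes the Skolem--Noether theorem.
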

\begin{proof} Via the morphism $e$ the scalar multiplication action on $\ms V$
  is sent to the trivial action on $p_\ast\send(\ms V)$ so that $e$ factors
  through an epimorphism of stacks $\eps:\mSh_{\ms X}^f\to\azumaya_{\widetilde
  X/S}^\alpha$.  It follows from the Skolem-Noether theorem that any
  isomorphism $p_\ast\send(\ms V)\simto p_\ast\send(\ms V')$ comes from an
  isomorphism $\ms V\simto\ms V'\tensor L$ for some invertible sheaf $L$, and
  that any invertible sheaf $L$ induces a canonical isomorphism
  $p_\ast\send(\ms V)\simto p_\ast\send(\ms V\tensor L)$.

  Since $G$ acts by twisting by invertible sheaves, the morphism $\eps$ factors
  through the quotient as $\widebar\eps:[\mSh^f_{\ms
  X}/G]\to\azumaya_{\widetilde X/S}^{\widebar\alpha}$.  On the other hand,
  suppose given an isomorphism $p_\ast\send(\ms V)\simto p_\ast\send(\ms W)$.
  By the above remark, we have that there is an invertble sheaf $M$ and an
  isomorphism $\ms V\simto\ms W\tensor M$.  Taking determinants gives an
  isomorphism $\det V\simto\det W\tensor M^{\tensor n}$.  Via the isomorphisms
  $\det V\simto\ms O$ and $\det W\simto\ms O$ we get a canonical isomorphism
  $M^{\tensor n}\simto O$, displaying $\ms W$ as the image of $\ms V$ under
  $\tensor M$.  This shows that $\widebar\eps$ is a monomorphism, showing that
  it is an isomorphism.  \end{proof}

We are now ready to compactify $\azumaya_{\widetilde X/S}^{\widebar\alpha}$ so
that there is a virtual fundamental class. We only sketch the idea here,
deferring a fuller treatment to a subsequent paper.

\begin{prop}\label{P:virt class} The stack $[\mSh_{\ms X}/G]$ carries a virtual
  fundamental class and compactifies $\azumaya_{\widetilde
  X/S}^{\widetilde\alpha}$.  
\end{prop}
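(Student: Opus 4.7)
The plan is to split the statement into two independent parts: the identification of $[\mSh_{\ms X}/G]$ as a compactification of $\azumaya_{\widetilde X/S}^{\widebar\alpha}$, and the construction of a virtual fundamental class via the standard deformation-theoretic obstruction theory for (twisted) sheaves on a surface.

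For the compactification, I would first observe that local freeness is an open condition on $\mSh_{\ms X}$, so $\mSh^f_{\ms X}\subset\mSh_{\ms X}$ is open. Proposition \ref{azumaya open part} then identifies $[\mSh^f_{\ms X}/G]$ with $\azumaya_{\widetilde X/S}^{\widebar\alpha}$, exhibiting the latter as an open substack of $[\mSh_{\ms X}/G]$. Since $G=\Pic_{\widetilde X/S}[n]$ is finite (and \'etale because $n$ is invertible on $S$), and Proposition \ref{stack of twisted sheaves is artin} says $\mSh_{\ms X}$ has proper connected components, the quotient $[\mSh_{\ms X}/G]$ inherits proper connected components, supplying the desired compactification.

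For the virtual class I would follow the template laid out in \cite{MR2579390}. The input is that for a torsion-free $\ms X$-twisted sheaf $\ms F$ of rank $n$ on the two-dimensional tame Deligne-Mumford stack $\widetilde X$, the derived endomorphism complex $\R\Hom(\ms F,\ms F)$ has cohomology only in degrees $0,1,2$. Because $n$ is invertible, the trace map $\R\Hom(\ms F,\ms F)\to\R\Gamma(\widetilde X,\ms O_{\widetilde X})$ is split by $(1/n)\cdot\mathrm{id}$, producing a trace-free summand $\R\Hom(\ms F,\ms F)_0$ that controls deformations with fixed (trivial) determinant. Stability, which is automatic when the rank equals the period of the Brauer class, guarantees $\Hom(\ms F,\ms F)_0=0$, so the shifted complex $\R\Hom(\ms F,\ms F)_0[1]^\vee$ has amplitude $[-1,0]$ and, via the truncated Atiyah class, defines a perfect obstruction theory in the sense of Behrend-Fantechi, producing a virtual fundamental class of virtual dimension $\mathrm{ext}^1_0-\mathrm{ext}^2_0$ on $\mSh_{\ms X}$.

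Finally I would pass to the quotient $[\mSh_{\ms X}/G]$ using functoriality of the Atiyah class under the $G$-action by tensoring with $n$-torsion line bundles, together with descent along the $\G_m$-gerbe $\Sh_{\ms X}\to\mSh_{\ms X}$ (scalar deformations are absorbed into the trace summand and disappear after fixing the determinant). The hardest step will be the equivariant bookkeeping needed to descend the obstruction theory from $\mSh_{\ms X}$ to the quotient and to reconcile it with the $\G_m$-gerbe structure; but the analogous construction for generalized Azumaya algebras is carried out in detail in \cite{MR2579390} and should adapt directly to the blt/root-stack setting, since the only extra input beyond that paper is the tameness of $\widetilde X$, which is built into the setup of Section \ref{sec:moduli-1}.
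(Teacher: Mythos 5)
Your proposal is correct and takes essentially the same approach as the paper: the paper's proof simply cites Proposition 6.5.1.1 and Section 6.5.2 of \cite{MR2579390} for the perfect obstruction theory of traceless endomorphisms (written there in its Serre-dual form $\R p_\ast\R\send(\ms V,\omega_{\widetilde X/S}\ltensor\ms V)_0$) and for its descent to the quotient $[\mSh_{\ms X}/G]$. Your unpacking of the compactification step (openness of the locally free locus, Propositions \ref{azumaya open part} and \ref{stack of twisted sheaves is artin}, finiteness of $G$) and of the stability input making the traceless complex have amplitude $[-1,0]$ is exactly what the paper leaves implicit.
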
 
\begin{proof}[Sketch of proof.] Exactly as in Proposition
  6.5.1.1 of \cite{MR2579390}, there is a natural virtual fundamental class on
  $\mSh_{\ms X}$ with perfect obstruction theory given by the complex of
  traceless homomorphisms $\R p_\ast\R\send(\ms V,\omega_{\widetilde
  X/S}\ltensor\ms V)_0$.  Taking the trace of this obstruction theory as in
  Section 6.5.2 of \cite{MR2579390} yields a perfect obstruction theory on the
  quotient $[\mSh_{\ms X}/G]$, as desired.  
\end{proof}

\bibliography{kulbib}{}
\bibliographystyle{plain}

\end{document}